\def\co{\colon\thinspace}
\DeclareMathAlphabet{\mathsfsl}{OT1}{cmss}{m}{sl}
\newcommand{\tensor}[1]{\mathsfsl{#1}}
\newtheorem{thm}{Theorem}[section]
\newtheorem{lem}[thm]{Lemma}
\newtheorem{cor}[thm]{Corollary}
\newtheorem{prop}[thm]{Proposition}
\newtheorem*{thm*}{Theorem}
\theoremstyle{definition}
\newtheorem{rem}[thm]{Remark}
\newcommand\sd{\mbox{-}}
\newcommand{\HF}{HF}
\newcommand{\Q}{\mathbb{Q}}
\newcommand{\R}{\mathbb{R}}
\newcommand{\Z}{\mathbb{Z}}
\newcommand{\E}{\mathbb{E}}
\newcommand{\cm}{\cdot}
\newcommand\SpinC{\mathrm{Spin}^c}
\newcommand{\F}{\mathbb F}
\newcommand\x{\mathbf x}
\newcommand\y{\mathbf y}
\newcommand\ModSphere{\ModFlow\left({\mathbb S}\longrightarrow 
\Sym^{g-1}(\Sigma_{1})\times \Sym^2(\Sigma_{2})\right)}
\newcommand\ModSpheres\ModSphere
\newcommand\CF{CF}
\newcommand\CFa{\widehat{CF}}
\newcommand\HFa{\widehat{HF}}
\newcommand\Mas{\mu}
\newcommand\UnparModSp{\widehat \ModSp}
\newcommand\UnparModFlow\UnparModSp
\newcommand\Mod\ModSp
\newcommand{\spinc}{\mathfrak s}
\newcommand\ModMaps{\mathcal M}
\newcommand\ModSp\ModMaps
\newcommand\Ta{{\mathbb T}_{\alpha}}
\newcommand\Tb{{\mathbb T}_{\beta}}
\newcommand\alphas{\mbox{\boldmath$\alpha$}}
\newcommand\betas{\mbox{\boldmath$\beta$}}
\newcommand\uCF{\underline\CF}
\newcommand\CFc{\CF^{\circ}}
\newcommand\uCFinf[1]{\uCF^\infty({#1})}
\newcommand\uCFp[1]{\uCF^+({#1})}
\newcommand\uCFm[1]{\uCF^-({#1})}
\newcommand\uCFa[1]{\underline{\CFa}({#1})}
\newcommand\uCFc[1]{\uCF^{\circ}({#1})}
\newcommand\uCFcR[1]{\uCF^{\circ}({#1};R_\omega)}
\newcommand\uCFcL[1]{\uCF^{\circ}({#1};\Lambda_\omega)}
\newcommand\CFcQ[1]{{\CFc}({#1};\Q)}
\newcommand\uHF{\underline\HF}
\newcommand\HFc{\HF^{\circ}}
\newcommand\uHFc[1]{\uHF^{\circ}({#1})}
\newcommand\uHFinf[1]{\uHF^\infty({#1})}
\newcommand\uHFaR[1]{\underline{\HFa}({#1};R_\omega)}
\newcommand\uHFpL[1]{\uHF^+({#1};\Lambda_\omega)}
\newcommand\uHFaL[1]{\underline{\HFa}({#1};\Lambda_\omega)}
\newcommand\HFaQ[1]{{\HFa}({#1};\Q)}
\newcommand\upartial{\underline{\partial}}
\newcommand\Dual{\mathcal D}
\newcommand\Duality\Dual
\newcommand\ons{Ozsv{\'a}th and Szab{\'o}}
\begin{document}

\title{Manifolds with small Heegaard Floer ranks}

\author{{Matthew HEDDEN\; and\; Yi NI}\\{\normalsize Department of Mathematics, MIT}\\
{\normalsize 77 Massachusetts Avenue, Cambridge, MA
02139-4307}\\{\small\it Emai\/l\/:\quad\rm mhedden@math.mit.edu}\\
{\small\rm yni@math.mit.edu}}

\date{}
\maketitle

\begin{abstract}
We show that the only irreducible three-manifold with positive
first Betti number and Heegaard Floer homology of rank two is
homeomorphic to zero-framed surgery on the trefoil.  We classify
links whose branched double cover gives rise to this manifold.
Together with a spectral sequence from Khovanov homology to the
Floer homology of the branched double cover, our results show that
Khovanov homology detects the unknot if and only if it detects the
two component unlink.
\end{abstract}

\section{Introduction}
In a sequence of papers, Ozsv{\'a}th and Szab{\'o} defined
invariants for a wide variety of topological and geometric objects
in low dimensions, including three- and four- manifolds, knots and
links, and contact structures
\cite{OSzAnn1,OSzAnn2,Knots,Links,Contact}.  These invariants
proved to be quite powerful, with striking applications to
questions in Dehn surgery, contact and symplectic geometry, knot
concordance, and questions about unknotting numbers (to name only
a few).

The Ozsv{\'a}th--Szab{\'o} invariants are particularly suited to
understand  homologically essential surfaces embedded in
three-manifolds.  In the context of knots, this is demonstrated by
the theorem that  knot Floer homology detects the Seifert genus of
a knot \cite{OSzGenus}.  More generally, the Floer invariants
capture the Thurston norm on the homology of link complements and
closed three-manifolds \cite{OSzGenus,OSzLinkTN}.  These theorems
have the immediate consequence that knot Floer homology detects
the unknot, as this is the only knot of Seifert genus zero.

In addition to the Thurston norm of a homology class, $\alpha\in
H_2(Y,\partial Y)$, the Ozsv{\'a}th--Szab{\'o} invariants answer
the subtler question of when a knot complement or closed
three-manifold fibers over the circle with fiber an embedded
surface, $\Sigma$, whose homology class equals $\alpha$
\cite{Gh,NiFibred,NiClosed,AiNi}. For knots, this again has a
beautiful corollary, namely that knot Floer homology detects the
trefoil and figure eight knots \cite{Gh}.  This follows from the
well-known fact that these are the only fibered knots with Seifert
genus one (together with an easy computation showing their Floer
homologies are different).

In this article we prove a similar theorem in the context of
closed three-manifolds.  Namely, we determine which irreducible
three-manifolds with positive first Betti number have rank $2$
Floer homology; there is only one.

\begin{thm}\label{thm:Rank2}
Suppose $Y$ is a closed irreducible $3$--manifold which satisfies
$b_1(Y)>0$. If $\mathrm{rank}\:\HFa(Y)=2$, then $Y$ is
homeomorphic (via a possibly orientation reversing map) to
$S^3_0(3_1)$, the manifold obtained by zero surgery on the
trefoil.
\end{thm}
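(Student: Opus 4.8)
The plan is to determine $Y$ in three stages: constrain its Betti numbers and locate its Floer homology, produce a toral surface and promote it to a fibration over the circle, and then classify the resulting torus bundle. First I would invoke the structure of $\HFinf$ for torsion $\SpinC$ structures. For torsion $\spinc$ with $b_1(Y)\le 2$ one has $\HFinf(Y,\spinc)\cong \F[U,U^{-1}]\otimes_{\F}\Lambda^* H^1(Y;\F)$, and in general $\HFinf(Y,\spinc)$ has rank at least $4$ over $\F[U,U^{-1}]$ once $b_1(Y)\ge 2$ \cite{OSzAnn2}. Since the bottom of each $U$--tower of $\HFp$ survives into $\HFa$, we have $\mathrm{rank}\,\HFa(Y,\spinc)\ge \mathrm{rank}_{\F[U,U^{-1}]}\HFinf(Y,\spinc)$, with equality exactly when $\HFpred(Y,\spinc)=0$. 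Thus $b_1(Y)\ge 2$ would force $\mathrm{rank}\,\HFa(Y)\ge 4$, and torsion in $H_1(Y)$ would contribute a second torsion $\SpinC$ structure of rank at least $2$. Hence $b_1(Y)=1$, $H_1(Y)\cong\Z$, there is a unique torsion $\SpinC$ structure $\spinc_0$, and the hypothesis forces $\HFa(Y,\spinc)=0$ for all $\spinc\neq\spinc_0$ together with $\HFpred(Y,\spinc_0)=0$.

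Next, let $\alpha$ generate $H_2(Y;\Z)\cong\Z$. Any $\spinc$ with $\langle c_1(\spinc),\alpha\rangle\neq 0$ is nontorsion, so $\HFa(Y,\spinc)=0$, and by the detection of the Thurston norm by $\HFa$ \cite{OSzGenus,OSzLinkTN} this gives $x(\alpha)=0$; a norm--minimizing representative of $\alpha$ is then a union of spheres and tori, the spheres bound balls by irreducibility, and $\alpha$ is carried by a nonempty union of embedded tori. The heart of the argument is to upgrade this to a fibration. Cutting $Y$ open along such a toral surface produces a taut balanced sutured manifold both of whose boundary pieces are unions of tori, and its sutured Floer homology is governed by $\HFa(Y,\spinc_0)$, which is as small as possible because $\HFpred(Y,\spinc_0)=0$. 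Feeding this into the fiberedness criteria for closed three--manifolds of Ni and of Ai--Ni \cite{NiFibred,NiClosed,AiNi}---which in the genus--one case are phrased with twisted coefficients and rest on Juh\'asz's characterization of product sutured manifolds by the rank of their sutured Floer homology---I would conclude that the cut--open manifold is a product, i.e. $Y$ fibers over $S^1$. Since $b_1(Y)=1$ the fiber is connected and Poincar\'e dual to $\alpha$, hence of genus at most one and not a sphere (irreducibility once more), so $Y$ is a $T^2$--bundle over $S^1$. I expect this fibering step to be the main obstacle: genus one is exactly the case in which $\HFa$ at the extremal $\SpinC$ structure cannot have rank one, so one is forced through the twisted theory or the sutured hierarchy rather than the cleaner genus $\ge 2$ statement.

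Finally I would classify the bundle. If $\phi$ is the monodromy, viewed in $\mathrm{MCG}(T^2)\cong SL_2(\Z)$, then $H_1(Y)\cong\Z\oplus\mathrm{coker}(\phi-I)$, so $H_1(Y)\cong\Z$ forces $\det(\phi-I)=2-\mathrm{tr}(\phi)=\pm 1$, i.e. $\mathrm{tr}(\phi)\in\{1,3\}$. If $\mathrm{tr}(\phi)=3$ then $\phi$ is Anosov, and since $SL_2(\Z)$ has a single conjugacy class of trace--$3$ elements up to inversion, $\phi$ is conjugate to the monodromy of the figure eight knot, so $Y\cong S^3_0(4_1)$; but a direct computation gives $\mathrm{rank}\,\HFa(S^3_0(4_1))=4$, contradicting the hypothesis. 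Hence $\mathrm{tr}(\phi)=1$, so $\phi$ has order $6$; the order--six elements of $SL_2(\Z)$ form a single conjugacy class up to inversion, realized by the monodromy of the trefoil, and the corresponding mapping torus is $S^3_0(3_1)$. The two possibilities---an order--six element and its inverse---yield the two orientations of $S^3_0(3_1)$ and correspond to the two trefoils, which is why the homeomorphism may reverse orientation.
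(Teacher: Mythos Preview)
Your three-step strategy---find a torus via Thurston norm detection, promote it to a fibration, then classify the torus bundle by its monodromy trace---is exactly the paper's. The first and third steps are essentially fine, though note that your appeal to \cite{OSzAnn2} for ``$\mathrm{rank}\,\HFinf\ge 4$ whenever $b_1\ge 2$'' is only established there for $b_1\le 2$; the paper instead finds the torus first (so that fibering gives $b_1\le 3$ automatically) and dispatches $b_1=3$ via the explicit computation for $T^3$.

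The genuine gap is in the fibering step. You deduce that the \emph{untwisted} $\HFpred(Y,\spinc_0)$ vanishes and then invoke Ai--Ni, but the hypothesis of that theorem is $\mathrm{rank}_\Lambda\,\uHFpL{Y}=1$ for a suitable $\omega$--\emph{twisted} Novikov coefficient system, and you give no mechanism for passing from the untwisted statement to the twisted one. Your sutured formulation has the same problem: in genus one the extremal $\SpinC$ structure coincides with the torsion one, so $\HFa(Y,\spinc_0)$ has rank at least $2$ regardless, and there is no direct identification of $SFH$ of the cut-open manifold with a rank--one group. The paper closes this gap in two moves. First, universal coefficients over the PID $R=\Q[T,T^{-1}]$ push the rank bound across: $\mathrm{rank}_\Z\,\HFa(Y)=2$ forces $\mathrm{rank}_R\,\uHFaR{Y,[F],0}\le 2$, and since $\Lambda$ is $R$--torsion--free this gives $\mathrm{rank}_\Lambda\,\uHFaL{Y,[F],0}\le 2$. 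Second, a result of Lekili \cite{Lek} shows that $U$ acts as zero on $\uHFpL{Y,[F],0}$, so the long exact sequence relating the plus and hat flavors yields $\mathrm{rank}_\Lambda\,\uHFpL{Y,[F],0}\le 1$; combined with the twisted nonvanishing of Theorem~\ref{thm:TwistNorm} one gets equality, and Ai--Ni applies. You correctly flag this step as the main obstacle, but the universal--coefficients transfer and the $U=0$ input from \cite{Lek} are the actual content needed to clear it.
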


Removing the irreducibility requirement, we obtain the following

\begin{cor}\label{cor:Rank2Gen}
Suppose $Y$ is a closed $3$--manifold with $b_1(Y)>0$. If the rank
of $\HFa(Y)$ is $2$, then $Y=Y'\#Z$, where $Y'$ is either
$\pm S^3_0(3_1)$ or $S^1\times S^2$ and $Z$ is an integer homology
sphere satisfying $\HFa(Z)\cong\mathbb Z$.
\end{cor}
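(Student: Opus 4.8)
The plan is to run $Y$ through its prime decomposition and exploit the behaviour of $\HFa$ under connected sum, so that Theorem~\ref{thm:Rank2} does all the real work. Write $Y \cong Y_1 \# \cdots \# Y_n$ with each $Y_i$ prime; since $b_1(Y) = \sum_i b_1(Y_i) > 0$ we have $n \geq 1$ and $Y \not\cong S^3$. Taking coefficients in the field $\F = \Z/2\Z$, the connected-sum formula of \os{} identifies $\HFa(Y)$ with $\bigotimes_{i=1}^n \HFa(Y_i)$, so that $\mathrm{rank}\:\HFa(Y) = \prod_{i=1}^n \mathrm{rank}\:\HFa(Y_i)$.

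Next I would apply two standard lower bounds. First, if $b_1(N) > 0$ then $\mathrm{rank}\:\HFa(N) \geq 2$, a consequence of the nonvanishing of $\HFinf(N,\spinc)$ for a torsion $\SpinC$ structure $\spinc$. Second, if $N$ is a rational homology sphere then $\HFa(N,\spinc)$ has odd (in particular nonzero) rank for each $\spinc \in \SpinC(N)$, whence $\mathrm{rank}\:\HFa(N) \geq |H_1(N;\Z)|$. Because $\prod_i \mathrm{rank}\:\HFa(Y_i)$ equals the prime number $2$ and each factor is a positive integer, exactly one summand $Y'$ has $\mathrm{rank}\:\HFa(Y') = 2$ while every other summand $Y_j$ has $\mathrm{rank}\:\HFa(Y_j) = 1$. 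For such a $Y_j$ the first bound gives $b_1(Y_j) = 0$ and the second gives $|H_1(Y_j;\Z)| = 1$, so $Y_j$ is an integer homology sphere, and rank one then forces $\HFa(Y_j) \cong \Z$. Consequently $b_1(Y') = b_1(Y) > 0$. Setting $Z := \#_{j \neq i} Y_j$ (with $Z := S^3$ when $n = 1$), the manifold $Z$ is an integer homology sphere with $\HFa(Z) \cong \Z$ by the Künneth formula, and $Y \cong Y' \# Z$.

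It remains to recognize $Y'$. Here I would use the classical dichotomy that an orientable prime $3$--manifold is either irreducible or homeomorphic to $S^1 \times S^2$. If $Y'$ is irreducible it satisfies the hypotheses of Theorem~\ref{thm:Rank2} (it is irreducible, $b_1(Y') > 0$, and $\mathrm{rank}\:\HFa(Y') = 2$), so $Y' \cong \pm S^3_0(3_1)$; if instead $Y' \cong S^1 \times S^2$, then $b_1 = 1$ and $\mathrm{rank}\:\HFa = 2$ as required. In either case $Y \cong Y' \# Z$ with $Y' \in \{\pm S^3_0(3_1),\, S^1 \times S^2\}$, which is the assertion of the Corollary.

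I expect no genuine difficulty beyond Theorem~\ref{thm:Rank2} itself; the steps above are essentially bookkeeping. The one point that needs a moment's care is the distinction between ``prime'' and ``irreducible'': the summand $S^1 \times S^2$ is prime but not irreducible, and it must be split off by hand before Theorem~\ref{thm:Rank2} becomes applicable. One should also keep the coefficient conventions straight, so that ``rank two'' is honestly multiplicative under connected sum and so that the conclusion $\HFa(Z) \cong \Z$ holds with integral (and not merely $\F$) coefficients; this is harmless here since all the manifolds in sight have torsion-free $\HFa$.
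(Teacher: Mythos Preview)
The paper does not give a separate proof of this corollary; it is stated in the introduction as an immediate consequence of Theorem~\ref{thm:Rank2} and then simply invoked in Section~5. Your argument---prime decomposition, multiplicativity of $\mathrm{rank}\,\HFa$ under connected sum, the two standard lower bounds, and the dichotomy that a prime orientable $3$--manifold is either irreducible or $S^1\times S^2$---is exactly the intended deduction, and it is correct in outline.

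One small technical wrinkle: your choice of $\F=\Z/2\Z$ coefficients is awkward, because the hypothesis concerns the $\Z$-rank of $\HFa(Y)$, and passing to $\F$ could in principle raise the rank if $2$-torsion is present. It is cleaner either to work over $\Q$ (where the dimension equals the $\Z$-rank) or to stay over $\Z$ and observe that the K\"unneth $\mathrm{Tor}$ term is torsion, so $\Z$-rank is already multiplicative under connected sum. Relatedly, your closing remark that ``all the manifolds in sight have torsion-free $\HFa$'' is not actually justified for the homology-sphere summand $Z$: nothing in the argument excludes torsion in $\HFa(Z)$, so strictly speaking you obtain only $\mathrm{rank}\,\HFa(Z)=1$ rather than $\HFa(Z)\cong\Z$. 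This is really a mild imprecision in the paper's own statement of the corollary; in the application to Theorem~\ref{thm:KhRank4} only the rank is used, so the point is harmless.
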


In addition to our interest in the theorem as it pertains to
understanding the power of Heegaard Floer homology, we can use it
to gain new insight into the mysterious world of quantum
invariants.  Indeed, the existence of a spectral sequence from the
Khovanov homology of a link to the Heegaard Floer homology of its
branched double cover \cite{OSzBrCov} allows us to prove the
following theorem.  To state it, let $\mathbb F=\mathbb Z/2\mathbb
Z$.

\begin{thm}\label{thm:KhRank4}
Suppose $L\subset S^3$ is a link with $\det(L)=0$. Then
$$\mathrm{rank}_{\mathbb F} Kh(L;\mathbb F)>4$$ unless $L$ is
isotopic to a two-component split link $L=K_1\sqcup K_2$ satisfying
$\mathrm{rank}_{\mathbb F} Kh(K_i;\mathbb F)=2$, $i=1,2$.
\end{thm}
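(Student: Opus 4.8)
The plan is to reduce Theorem~\ref{thm:KhRank4} to Theorem~\ref{thm:Rank2} by exploiting the spectral sequence of \cite{OSzBrCov}, which for any link $L\subset S^3$ has $E_2$-page the (reduced or unreduced) Khovanov homology $Kh(L;\F)$ and converges to $\HFa(\Sigma(L);\F)$, the Heegaard Floer homology of the double cover of $S^3$ branched over $L$, taken with $\F=\Z/2\Z$ coefficients. The condition $\det(L)=0$ is exactly the statement that $|H_1(\Sigma(L);\Z)|=|\det(L)|=0$, i.e.\ that $b_1(\Sigma(L))>0$, so $\Sigma(L)$ is a closed $3$-manifold to which Theorem~\ref{thm:Rank2} and Corollary~\ref{cor:Rank2Gen} potentially apply.

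First I would record the rank inequality coming from the spectral sequence: since $E_\infty$ is a subquotient of $E_2$, we get $\operatorname{rank}_\F Kh(L;\F)\ge \operatorname{rank}_\F\HFa(\Sigma(L);\F)$. (One must be a little careful about reduced versus unreduced theories and about $\Z$ versus $\F$ coefficients; I would phrase everything over $\F$ and use the unreduced Khovanov homology, noting $\operatorname{rank}_\F\widetilde{Kh}(L;\F)=\tfrac12\operatorname{rank}_\F Kh(L;\F)$ so the factor of two is harmless.) Next, since $\operatorname{rank}_\F \HFa(\Sigma(L);\F)$ is always even (it carries an absolute $\Z/2$ grading and Euler characteristic $|H_1|=0$ when $b_1>0$, so even rank; alternatively it is a finitely generated module over $\F$ with the standard parity constraints), the only way to have $\operatorname{rank}_\F Kh(L;\F)\le 4$ is $\operatorname{rank}_\F\HFa(\Sigma(L);\F)\in\{2,4\}$ combined with a small Khovanov group—and I would argue that $\operatorname{rank}_\F Kh(L;\F)=2$ forces $L$ to be an unknot (a known fact, or a consequence of the det hypothesis failing), so the interesting case is $\operatorname{rank}_\F Kh(L;\F)\le 4$ with $b_1(\Sigma(L))>0$, giving $\operatorname{rank}_\F\HFa(\Sigma(L);\F)=2$ (the case $4$ being excluded by a more careful count, or handled directly).

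Now I invoke the classification. By Corollary~\ref{cor:Rank2Gen} (applied over $\F$—the rank-two statement holds with $\F$ coefficients too since the relevant ranks only go down under $\Z\to\F$ and one checks they stay $2$), $\Sigma(L)\cong Y'\#Z$ with $Y'\in\{\pm S^3_0(3_1),\,S^1\times S^2\}$ and $Z$ an integer homology sphere with $\HFa(Z;\F)\cong\F$. The paper's separate classification of links whose branched double cover is $S^3_0(3_1)$ (promised in the abstract, and which I would cite as the companion result to Theorem~\ref{thm:Rank2}) then pins down $L$. The branched double cover being a connected sum $Y'\#Z$ means $L$ is a connected sum of links, and the summand whose branched double cover is $S^1\times S^2$ must be the two-component unlink $U_2$; the summand giving $\pm S^3_0(3_1)$ is (by the companion classification) built from the trefoil, but one checks those links have $\operatorname{rank}_\F Kh>4$, so they are excluded. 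The $\F$-homology-sphere connected summand $Z$ contributes a branched-cover summand over a knot $K$ with $\Sigma(K)$ an $\F$-homology sphere of rank-one $\HFa$; its Khovanov homology has $\operatorname{rank}_\F\ge 2$, with equality forcing $K$ to be the unknot. Assembling: $L=K_1\sqcup K_2$ with each $\Sigma(K_i)$ contributing, and the multiplicativity of Khovanov rank under split union, $\operatorname{rank}_\F Kh(K_1\sqcup K_2;\F)=\operatorname{rank}_\F Kh(K_1;\F)\cdot\operatorname{rank}_\F Kh(K_2;\F)$, together with $\le 4$, forces $\operatorname{rank}_\F Kh(K_i;\F)=2$ for each $i$, which is exactly the asserted conclusion.

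The main obstacle I anticipate is the bookkeeping at the boundary between "$\operatorname{rank}\le 4$" and the rank-$2$ classification: one must rule out that $\Sigma(L)$ has $\operatorname{rank}_\F\HFa=4$ while $\operatorname{rank}_\F Kh(L;\F)=4$ (no room for differentials), which requires either an independent argument that such an $L$ with $\det(L)=0$ cannot occur, or a direct analysis of rank-$4$ branched double covers—and also handling the reduced/unreduced and $\Z$-vs-$\F$ coefficient subtleties so that Corollary~\ref{cor:Rank2Gen} genuinely applies. The cleanest route is probably to work entirely with reduced Khovanov homology $\widetilde{Kh}(L;\F)$, whose $\operatorname{rank}_\F$ is $\tfrac12\operatorname{rank}_\F Kh(L;\F)$ and which the spectral sequence relates to the reduced $\HFa$, so the inequality becomes $\operatorname{rank}_\F\widetilde{Kh}(L;\F)\ge \tfrac12\operatorname{rank}_\F\HFa(\Sigma(L);\F)$, and the hypothesis $\operatorname{rank}_\F Kh(L;\F)\le 4$ reads $\operatorname{rank}_\F\widetilde{Kh}(L;\F)\le 2$, forcing $\operatorname{rank}_\F\HFa(\Sigma(L);\F)\le 4$; then one is squarely in the range covered by Theorem~\ref{thm:Rank2}/Corollary~\ref{cor:Rank2Gen} after the small additional argument that rank exactly $4$ leads to the split-unlink conclusion and rank exactly $2$ to the unknot.
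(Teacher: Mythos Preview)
Your overall architecture matches the paper's, but there are two real gaps. First, your spectral-sequence bookkeeping is off: the $E_2$ page is the \emph{reduced} Khovanov homology $\widetilde{Kh}(\overline L;\F)$, converging to $\HFa(\Sigma(L);\F)$, so the correct inequality is $\operatorname{rank}_\F\widetilde{Kh}(L;\F)\ge\operatorname{rank}_\F\HFa(\Sigma(L);\F)$, equivalently $\operatorname{rank}_\F Kh(L;\F)\ge 2\operatorname{rank}_\F\HFa(\Sigma(L);\F)$ (Proposition~\ref{prop:ssbound}). With this in hand, $\operatorname{rank}_\F Kh(L;\F)\le 4$ forces $\operatorname{rank}\HFa(\Sigma(L))\le 2$ directly, and your ``obstacle'' about a possible rank-$4$ Floer group disappears; the stray factor of $\tfrac12$ in your final paragraph is exactly what manufactures that spurious case. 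You also skip two structural steps the paper needs: Lee's theorem to force $|L|\le 2$, and the Equivariant Sphere Theorem (Proposition~\ref{prop:CovIrr}) both to rule out an $S^1\times S^2$ summand when $L$ is non-split and to transport the Kneser--Milnor decomposition of $\Sigma(L)$ back to a decomposition of $L$.

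The more serious gap is the sentence ``one checks those links have $\operatorname{rank}_\F Kh>4$.'' After the classification one has $L=L_0\# K$ with $L_0\in\{H_{1,3},H_{-1,-3}\}$ and $K$ a knot, and you must show $\operatorname{rank}_\F Kh(L;\F)>4$ for \emph{every} such connected sum, not merely for $L_0$ itself. This is the crux of the proof and is not routine. The paper handles it by a Jones-polynomial argument: if $\operatorname{rank}_\F\widetilde{Kh}(L;\F)=2$ then $J_L$ has exactly two monomials, so (since $J_L=J_{L_0}\cdot J_K$) every nonzero root of $J_{L_0}$ would be a root of unity; one then computes $J_{H_{-1,-3}}$ explicitly and shows by an elementary argument-of-complex-numbers computation that its roots are not roots of unity. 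Nothing in your proposal substitutes for this step.
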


\begin{cor}\label{cor:Khcor}
Khovanov homology detects the unknot if and only if it detects the two-component unlink.
\end{cor}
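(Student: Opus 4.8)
The plan is to read the corollary off from Theorem~\ref{thm:KhRank4} by a soft argument, treating the two implications separately. Throughout, ``$Kh$ detects the link $L_0$'' means that every $L\subset S^3$ with $Kh(L;\F)\cong Kh(L_0;\F)$ as a bigraded $\F$--vector space is isotopic to $L_0$; I note in passing that recording the bigrading is essential on the unlink side, since the Hopf link has $\mathrm{rank}_\F Kh=4$ and so cannot be separated from the two--component unlink $U_2=U\sqcup U$ by total rank alone — but the Hopf link's graded Euler characteristic, equivalently its Jones polynomial and hence its determinant, differs from that of $U_2$, and it is exactly this $\det=0$ information that will let us invoke Theorem~\ref{thm:KhRank4}. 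Two elementary inputs will be used: (i) the Khovanov complex of a split link is the tensor product of those of its pieces, so over a field $Kh(L_1\sqcup L_2;\F)\cong Kh(L_1;\F)\otimes_\F Kh(L_2;\F)$ as bigraded spaces; and (ii) a knot $K$ with $\mathrm{rank}_\F Kh(K;\F)=2$ automatically has $Kh(K;\F)\cong Kh(U;\F)$ bigraded, where $U$ is the unknot, because by Shumakovitch's theorem $\widehat{Kh}(K;\F)$ then has rank one, so the reduced Jones polynomial of $K$ is a monomial, and the classical identities $V_K(1)=1$ and $V_K'(1)=0$ force that monomial to be $1$, which fixes the bidegree.

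For the implication ``$Kh$ detects $U_2$'' $\Rightarrow$ ``$Kh$ detects $U$'', I would take a knot $K$ with $Kh(K;\F)\cong Kh(U;\F)$. By input (i),
\[Kh(K\sqcup U;\F)\;\cong\;Kh(K;\F)\otimes_\F Kh(U;\F)\;\cong\;Kh(U;\F)^{\otimes 2}\;\cong\;Kh(U_2;\F)\]
as bigraded spaces, so the hypothesis forces $K\sqcup U$ to be isotopic to $U_2$; since every component of the two--component unlink is an unknot, $K=U$.

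For the converse I would start with a link $L$ satisfying $Kh(L;\F)\cong Kh(U_2;\F)$. Then $\mathrm{rank}_\F Kh(L;\F)=4$, and the graded Euler characteristic of $Kh(L;\F)$ — hence the determinant — agrees with that of $U_2$; in particular $\det(L)=0$. This places us precisely in the hypothesis of Theorem~\ref{thm:KhRank4}, which, since $\mathrm{rank}_\F Kh(L;\F)\le 4$, forces $L$ to be a two--component split link $K_1\sqcup K_2$ with $\mathrm{rank}_\F Kh(K_i;\F)=2$ for $i=1,2$. By input (ii) each $K_i$ has $Kh(K_i;\F)\cong Kh(U;\F)$ bigraded, so the unknot--detection hypothesis gives $K_i=U$, whence $L\cong U\sqcup U=U_2$.

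The main obstacle is entirely contained in Theorem~\ref{thm:KhRank4}; granted that theorem, the corollary is bookkeeping, and the only point that genuinely requires care is the one flagged above, namely interpreting ``detects $U_2$'' finely enough (bigraded homology, equivalently total rank together with the Jones polynomial) that links such as the Hopf link are not spuriously confused with $U_2$. The remaining ingredients — the K\"unneth formula for split links, the Jones polynomial identities, and the fact that a component of the two--component unlink is unknotted — are all classical.
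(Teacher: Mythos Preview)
Your argument is correct and follows the same route as the paper: both directions are handled exactly as in the paper's proof, via the split--link K\"unneth formula and an appeal to Theorem~\ref{thm:KhRank4}. The one place where you are more careful than the paper is your input~(ii): the paper passes directly from ``$\mathrm{rank}_\F Kh(K_i;\F)=2$'' to ``$K_i$ has the Khovanov homology of the unknot'' without comment, whereas you justify the bigraded identification using $V_K(1)=1$ and $V_K'(1)=0$; this is a genuine (if small) gap you have filled.
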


This theorem should be compared with the results of
\cite{Thistlethwaite,EKT} which provide examples of two (or more)
component links with Jones polynomial equal to that of the unlink.
By construction these links are non-split, so our result implies
that they are distinguished from the unlink by their Khovanov
homology.  This highlights the strength of Khovanov homology over
its Euler characteristic, and indicates that the question of
whether Khovanov homology detects the unknot may be of a much
different nature than the corresponding question for the Jones
polynomial.

The proof of Theorem~\ref{thm:Rank2} begins by using the Thurston
norm detection of Floer homology  to show the rank assumption
implies that the manifold contains a homologically essential
torus.  The ability of Floer homology to detect fibering then shows
that this torus is the fiber in a surface bundle over the circle.
Lower bounds on the rank of Floer homology in terms of
$H_1(Y;\Z)$, together with a simple analysis of this group for
torus fibrations, shows that the manifold is zero surgery on the
trefoil  or figure eight.  The latter manifold, however, has Floer
homology of rank four by direct calculation.

Theorem~\ref{thm:KhRank4} follows from the spectral sequence from
Khovanov homology to the Floer homology of the branched double
cover mentioned above.  The main work in this step is to
understand which links in the three-sphere have $S^3_0(3_1)$
as their branched double cover.  We classify such links (there are
essentially only two) in Section \ref{sec:whichlinks} by a
geometric argument based on the fact that $S^3_0(3_1)$ admits a
Euclidean geometry.

We conclude by remarking that the question of whether Khovanov
homology detects the unknot could likely be understood through
Heegaard Floer homology if the manifolds, $Z$, appearing in
Corollary~\ref{cor:Rank2Gen} could be appropriately classified.
Indeed, the question of which integer homology three-spheres have
rank one Floer homology (the so-called homology sphere L-spaces)
is interesting for a variety of reasons.  Even with our limited
understanding of homology sphere L-spaces, one can gain useful
information about Khovanov homology.  For instance, one can easily
show that the Khovanov homology of the two-cable (and many other
satellite operations) detects the unknot \cite{2cable,
TangleUnknotting}. See also, \cite{Ef,GW}, for related results.

\bigskip

\noindent{\bf Acknowledgements.}\quad We are very grateful to
Peter Ozsv\'ath for a helpful conversation about the Universal
Coefficients Theorem. We thank Yanki Lekili for helpful
conversations. We also thank Chuck Livingston and Alan Edmonds for
their interest in this work. The first author was partially
supported by NSF grant number DMS-0706979, and thanks the Indiana
University math department for their hospitality during an
extended visit in which some of this work was completed. The
second author is partially supported by an AIM Five-Year
Fellowship and NSF grant number DMS-0805807.

\section{Preliminaries}

In this section we recall some necessary background on the
Heegaard Floer  and Khovanov homology theories, respectively. Our
purpose is mainly to establish notation and collect results which
will be used in the subsequent sections.  For the unfamiliar
reader, we refer to \cite{FloerClay2,OSzSurvey} for an
introduction to Ozsv{\'a}th--Szab{\'o} theory and
\cite{Kh,BarNatan} for material on Khovanov homology.

\subsection{Twisted Heegaard Floer homology}\label{subsec:twisted}
Heegaard Floer homology (or Ozsv{\'a}th--Szab{\'o} homology)
assigns chain complexes to a $\SpinC$ three-manifold,
$(Y,\spinc)$.  One can import the  general construction of
homology with twisted coefficients (see, for instance
\cite{Hatcher}) into this theory in a variety of useful ways,
which we now recall.

The input for the theory is an admissible pointed Heegaard diagram
$$(\Sigma,\alphas,\betas,z)$$ for $(Y,\spinc)$.  Taking the
symmetric product of the diagram, one arrives at the
$2g$-dimensional symplectic manifold Sym$^g(\Sigma)$, together
with two Lagrangian submanifolds $\Ta,\Tb,$ and a distinguished
hypersurface, $V_z$, which arise from the attaching curves and
basepoint, respectively (see \cite{OSzAnn1} for more details,
specifically Sections $2$ and $4$).

The most general construction of Heegaard Floer homology with
twisted coefficients defines a chain complex, $\uCFinf{Y,\spinc}$,
which is freely generated over the ring
$\Z[U,U^{-1}]\otimes_\Z\Z[H^1(Y;\Z)]$ by intersection points $\x
\in\Ta\cap\Tb$ whose  associated $\SpinC$ structure corresponds to
$\spinc$. Here $H^1(Y;\Z)$ is the first singular cohomology of
$Y$, and $U$ is a formal variable of degree $-2$.

Given $\x,\y\in \Ta\cap\Tb$, let  $\pi_2(\x,\y)$ denote the set of
homotopy classes of Whitney disks connecting $\x$ to $\y$.  The
twisted coefficient ring is a reflection of the fact that
$\pi_2(\x,\y)\cong\Z\oplus H^1(Y;\Z)$.\footnote{To identify this
with the standard construction of homology with twisted
coefficients, observe that $\pi_2(\x,\y)$ is the fundamental group
of the configuration space of paths in Sym$^g(\Sigma)$ from $\Ta$
to $\Tb$.  Heuristically, Heegaard Floer homology is the Morse
homology of this configuration space with respect to a specific
action functional.}  Pick an additive assignment  $A:
\pi_2(\x,\y)\rightarrow H^1(Y)$ in the sense of Definition $2.12$
of \cite{OSzAnn1}, and define an endomorphism
$$ \upartial:  \uCFinf{Y,\spinc} \rightarrow \uCFinf{Y,\spinc},$$
by the formula:
$$\upartial \x=\sum_{\y\in\Ta\cap\Tb}
\sum_{\{\phi\in\pi_2(\x,\y)\big|\Mas(\phi)=1\}}
\#\left(\UnparModFlow(\phi)\right)U^{n_z(\phi)}\otimes
e^{A(\phi)}\cm  \y,$$ where $\UnparModFlow(\phi)$ denotes the
quotient of the moduli space of unparametrized $J$-holomorphic
disks representing the homotopy class $\phi$, $\mu(\phi)$ is the
Maslov index, and $n _z(\phi)=\#\phi\cap V_z$ is the algebraic
intersection of $\phi$ with the hypersurface. Here, we have
denoted elements in $\Z[H^1(Y)]$ in the typical exponential
notation.  Gromov compactness for $J$-holomorphic curves ensures
that $\upartial\circ\upartial=0$, and we denote the homology of the
resulting chain complex by $\uHFinf{Y,\spinc}$. The results of
\cite{OSzAnn1,OSzAnn2} indicate that these groups depend only on
the $\SpinC$-diffeomorphism type of the pair $(Y,\spinc)$.  We
refer the reader to the aforementioned papers for more details.
Section $8$ of \cite{OSzAnn2}, in particular, introduces this
notion of twisted coefficients.

We could alternatively take the chain complexes to be generated
over either of the rings  $$U\cm\Z[U]\otimes H^1(Y) \ \ \ \ \ \ \
\ \ \ \Z[U,U^{-1}]/U\cm\Z[U]\otimes H^1(Y).$$  We denote the
resulting complexes by $\uCFm{Y,\spinc}, \uCFp{Y,\spinc}$,
respectively.  Positivity of intersections for $J$-holomorphic
curves ensures that this is well-defined and, moreover, that we
have a short exact sequence:
$$\begin{CD}
0\to \uCFm{Y,\spinc}\to \uCFinf{Y,\spinc}\to \uCFp{Y,\spinc}\to0
\end{CD}
$$
(with corresponding long exact sequence of homology). We can also
define a complex $\uCFa{Y,\spinc}$ by the following exact sequence
$$\begin{CD}
0\to \uCFa{Y,\spinc}\to \uCFp{Y,\spinc}\overset{\cm U}\to \uCFp{Y,\spinc}\to0
\end{CD}
$$

The homology of the various complexes are invariants of the pair
$(Y,\spinc)$ and are denoted $\uHF^+,\uHF^-,\widehat{\uHF}$.

Note that  $\Z[U,U^{-1}]\otimes \Z[H^1(Y)]$ is naturally a
$\Z[H^1(Y)]$ module (by letting $\Z[H^1(Y)]$ act trivially on
$\Z[U,U^{-1}]$).  Thus, for any $\Z[H^1(Y)]$-module, $M$, it makes
sense to consider Heegaard Floer homology with coefficients in
$M$.  By definition, these are the groups:
$$\uHFc{Y,\spinc;M}:=H_*(\uCFc{Y,\spinc}\otimes_{\Z[H^1(Y)]} M),$$
where $\uCF^\circ$ denotes any of the chain complexes ($\widehat{\pm},\infty$)  considered above.

We will be interested in $\Z[H^1(Y)]$-modules which focus
attention on the part of $H^1(Y)$ which pairs non-trivially
with certain $1$-dimensional homology classes.  More precisely,
given $\omega\in H_1(Y)$, we consider the ring $R=\Q[T,T^{-1}]$ as
a $\Z[H^1(Y)]$ module by defining the action:
$$ e^{\gamma}\cm 1 =  T^{\gamma(\omega)},$$
where $1\in \Q[T,T^{-1}]$, $e^\gamma\in \Z[H^1(Y)]$, and
$\gamma(\omega)$ is the natural pairing between cohomology and
homology.  We denote chain complexes with coefficients in $R$ by
$$\uCFcR{Y,\spinc}:= \uCFc{Y,\spinc}\otimes_{\Z[H^1(Y)]} R,$$
and refer to them as the $\omega$-twisted Heegaard Floer chain
complexes. We can also complete this coefficient ring in a particularly
useful way.  Define an $R$-module by
$$\Lambda=\left\{\left. \sum_{r\in\R} a_r T^r \right|  a_r\in \R, \#\{a_r| a_r \ne 0, r\le c\}<\infty \ \ \text{for \ any \ } c\in \R \right\},$$
where $R$ acts on $\Lambda$ by polynomial multiplication. We refer
to $\Lambda$ as the {\em universal Novikov ring}, and the
corresponding chain complexes
$$\uCFcL{Y,\spinc}:= \uCFcR{Y,\spinc}\otimes_R \Lambda$$
as Heegaard Floer chain complexes with $\omega$-twisted Novikov
coefficients or, following \cite{JM}, as the $\omega$-{\em
perturbed} Floer homology. Observe that $\Lambda$ is a field.

Note that we can disregard the twisting altogether by letting $R$
act trivially on $\Q$ (or any other ring, e.g $\Z,$ or $\Z/2\Z$)
by the rule $T\cm a = a,$ where $a\in \Q$ (with corresponding
trivial action on any other ring). The resulting chain complexes
 $$ \CFcQ{Y,\spinc}:=\uCFcR{Y,\spinc}\otimes_R \Q$$
are the ordinary (untwisted) Heegaard Floer chain complexes with
coefficients in $\Q$.   Throughout, we will adopt the notation
that $\HFc(Y,\spinc):= \HFc(Y,\spinc;\Z)$, i.e. in the untwisted
world we use $\Z$ coefficients unless otherwise specified.

Finally, given $[F]\in H_2(Y;\Z)$, we can consider the direct sum
of chain complexes corresponding to $\SpinC$ structures on $Y$
whose first Chern class evaluates on $[F]$ as a specified integer.
In this case, we adopt the notation:
$$  \CFc(Y,[F],i):= \underset{\{\spinc\in \SpinC(Y)|  \langle c_1(\spinc),[F]\rangle=2i\} }\bigoplus \CFc(Y,\spinc),$$
with corresponding notation for homology groups and the various coefficient rings described above.

\subsection{Non-triviality theorems}

In this subsection, we collect a few key theorems concerning
non-triviality of Floer homology.  We first state  a
generalization of \ons's theorem that Floer homology detects the
Thurston norm of a closed three manifold.  To do this recall that
the {\em complexity} of a closed surface, $F$, is the quantity:
$$x(F)=\sum_{F_i\subset F} \text{max}\{0,-\chi(F_i)\},$$
where the sum is taken over all connected components of $F$ and
$\chi$ is the Euler characteristic. A homologically non-trivial
surface, $F$, in an irreducible three-manifold is called {\em
taut} if it minimizes complexity amongst all embedded surfaces
whose associated homology class is equal to $[F]\in H_2(Y;\Z)$. We
have the following
\begin{thm}{\rm\cite[Theorem~3.6]{NiNSSphere}}\label{thm:TwistNorm}
Suppose $Y$ is a closed irreducible $3$--manifold, and $F$ is a taut
surface in $Y$. Then there exists a nonempty open set $U\subset
H_1(Y;\mathbb R)$, such that for any $\omega\in U$,
$$\uHFpL{Y,[F],\frac12x(F)}\ne0, \quad \uHFaL{Y,[F],\frac12x(F)}\ne0.$$
\end{thm}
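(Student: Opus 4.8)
The plan is to manufacture non-trivial twisted Floer classes by contact-geometric means, exploiting the chain: taut surface $\Rightarrow$ taut foliation with $F$ as a leaf $\Rightarrow$ weakly symplectically fillable contact structure $\Rightarrow$ non-vanishing twisted contact invariant in the $\SpinC$ summand determined by $[F]$. Concretely, since $Y$ is closed and irreducible and $F$ is taut, Gabai's theorem provides a smooth co-oriented taut foliation $\mathcal F$ of $Y$ having each component of $F$ as a compact leaf. By the Eliashberg--Thurston confoliation theorem together with a symplectic capping argument, any positive contact structure $\xi$ that $C^0$-approximates $\mathcal F$ is weakly symplectically fillable; moreover the construction lets one fix in advance any closed $2$-form $\omega$ on $Y$ positive on $T\mathcal F$ and produce a weak filling $(X,\omega_X)$ with $[\omega_X|_Y]=[\omega]\in H^2(Y;\R)$. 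The set of cohomology classes of closed $2$-forms positive on $T\mathcal F$ is open, and nonempty precisely because $\mathcal F$ is taut; I would let $U\subset H_1(Y;\R)$ be its image under Poincar\'e duality.

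Next I would invoke the theorem of \ons, in the form for Novikov coefficients, that a weakly symplectically fillable contact structure has non-vanishing twisted contact invariant: for each $\omega\in U$ the class $\underline c(\xi)$ is a non-zero element of $\uHFaL{-Y,\spinc_\xi}$, with twisting taken with respect to $\omega$. Since $\Lambda$ is a field, the duality pairing between the Floer homologies of $Y$ and $-Y$ yields $\uHFaL{Y,\spinc_\xi}\ne 0$, conjugation symmetry of Heegaard Floer homology then gives $\uHFaL{Y,\overline{\spinc}_\xi}\ne 0$, and the long exact sequence relating $\underline{\HFa}$ to $\uHF^+$ via multiplication by $U$ forces $\uHFpL{Y,\overline{\spinc}_\xi}\ne 0$ as well.

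It remains to pin down the $\SpinC$ grading. Because $\xi$ is $C^0$-close to $\mathcal F$ the two plane fields are homotopic, so $c_1(\spinc_\xi)=e(T\mathcal F)$; evaluating on the compact leaf $[F]$ --- and using that $F$, being taut in an irreducible manifold, has no sphere components --- Thurston's inequality is sharp, giving $\langle c_1(\spinc_\xi),[F]\rangle=\chi(F)=-x(F)$, hence $\langle c_1(\overline{\spinc}_\xi),[F]\rangle=x(F)=2\cdot\frac12 x(F)$. Therefore $\overline{\spinc}_\xi$ contributes to the summands $\uHFpL{Y,[F],\frac12 x(F)}$ and $\uHFaL{Y,[F],\frac12 x(F)}$, which are non-zero for every $\omega\in U$. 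The main obstacle lies in the interplay of the last two steps: one needs both that the approximating contact structure detects the sharpness of Thurston's inequality along $F$ --- so that the contact invariant sits in the extremal $\SpinC$ grading relative to $[F]$ --- and that the symplectic filling form can be chosen so that its restriction to $Y$ traces out a genuinely open subset of $H_1(Y;\R)$ while the non-vanishing of the twisted contact class persists. This is the point at which the confoliation construction has to be carefully meshed with the weak-filling hypothesis underlying \ons's non-vanishing theorem.
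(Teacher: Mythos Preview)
The paper does not prove this statement; it is quoted from \cite[Theorem~3.6]{NiNSSphere} and used as input, with only the untwisted corollary (Theorem~\ref{thm:UnTwistNorm}) argued in the text. There is therefore no in-paper proof to compare against.

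Your sketch is nonetheless the standard argument, and is essentially how the cited reference (building on \cite{OSzGenus}) proceeds: Gabai produces a taut foliation $\mathcal F$ with $F$ among its compact leaves; Eliashberg--Thurston perturbs $\mathcal F$ to a contact structure $\xi$ which is weakly fillable by a symplectic form restricting to any prescribed closed $2$-form dominating $\mathcal F$; the cohomology classes of such dominating forms constitute an open nonempty subset of $H^2(Y;\R)$; and the Ozsv\'ath--Szab\'o non-vanishing theorem for the $\omega$-twisted contact invariant of a weak filling supplies the required non-zero Floer class. Your $\SpinC$ computation $\langle c_1(\spinc_\xi),[F]\rangle=\chi(F)=-x(F)$ is correct once sphere components of $F$ are discarded, which is legitimate since $Y$ is irreducible and $F$ is taut.

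One technical wrinkle worth flagging: the conjugation-symmetry step, passing from $\uHFaL{Y,\spinc_\xi}\ne0$ to $\uHFaL{Y,\overline{\spinc}_\xi}\ne0$, requires checking how conjugation interacts with the $\Lambda_\omega$-module structure; in general it exchanges $\omega$ with $-\omega$. This does not break the argument --- replacing $U$ by $-U$, which is still open and nonempty, repairs it --- but the sign should be tracked explicitly rather than suppressed. The concerns you raise in your final paragraph (openness of $U$, compatibility of the filling form with the non-vanishing theorem) are exactly the points where the care in \cite{NiNSSphere} is spent.
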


This implies the following for the homology with untwisted coefficients  following \begin{thm}\label{thm:UnTwistNorm}
Suppose $Y$ is a closed $3$--manifold, $F$ is a taut surface in
$Y$. Then
$${\HFa}(Y,[F],\frac12x(F))\otimes\mathbb Q\ne0.$$
\end{thm}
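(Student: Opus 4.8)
The plan is to deduce this from Theorem~\ref{thm:TwistNorm} by a universal coefficients argument over a polynomial ring, together with a reduction to the irreducible case via the connected sum formula. Throughout set $i=\frac12 x(F)$ (an integer, since $F$ is oriented). I would first dispose of the irreducible case and then reduce the general case to it.

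\emph{The irreducible case.} Suppose $Y$ is irreducible, and let $U\subset H_1(Y;\mathbb R)$ be the open set provided by Theorem~\ref{thm:TwistNorm}. First I would observe that nonvanishing of $\uHFaL{Y,[F],i}$ is unaffected by replacing $\omega$ with a positive multiple: this only rescales the Novikov exponent, which is induced by a ring automorphism of $\Lambda$, so the homology changes only by a semilinear isomorphism. Since $U$ is open and non-empty it contains a rational class, and rescaling we may assume $\omega$ lies in the image of $H_1(Y;\Z)$, so that the coefficient ring $R=\Q[T,T^{-1}]$ of Section~\ref{subsec:twisted} is literally a principal ideal domain. Then $\uCFaR{Y,[F],i}$ is a bounded complex of finitely generated free $R$-modules (free on the intersection points of a fixed admissible diagram), and by definition $\uCFaL{Y,[F],i}=\uCFaR{Y,[F],i}\otimes_R\Lambda$. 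Because $\Lambda$ is a field containing $R$, it is torsion-free, hence flat, over the PID $R$; therefore $\uHFaL{Y,[F],i}=H_*(\uCFaR{Y,[F],i})\otimes_R\Lambda$, and Theorem~\ref{thm:TwistNorm} forces the finitely generated $R$-module $H_*(\uCFaR{Y,[F],i})$ to have positive rank, i.e.\ a free summand $R^r$ with $r\ge 1$. Now I would apply the algebraic universal coefficients theorem to this free complex with coefficient module $\Q=R/(T-1)$, which is exactly the untwisted complex $\CFaQ{Y,[F],i}$: the natural map $H_*(\uCFaR{Y,[F],i})\otimes_R\Q\to \HFaQ{Y,[F],i}$ is injective in each degree, and the free summand $R^r$ contributes $\Q^r\ne 0$ to its source. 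Hence $\HFaQ{Y,[F],i}\ne 0$. Finally, since $\CFaQ{\cdot}$ is obtained from the integral complex by $\otimes_\Z\Q$ and $\Q$ is flat over $\Z$, this group equals $\HFa(Y,[F],i)\otimes\Q$, which is what is claimed.

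\emph{Reduction to the irreducible case.} Write $Y=Y_1\#\cdots\#Y_n$ with each $Y_j$ prime. Since the complexity $x$ is additive under connected sum, a taut $F$ may be isotoped transverse to the connect-sum spheres and then surgered along innermost compressing disks on them without increasing complexity or changing $[F]$; after discarding the resulting null-homologous sphere components we may take $F=F_1\sqcup\cdots\sqcup F_n$ with $F_j\subset Y_j$, $[F]=\sum_j[F_j]$ under $H_2(Y;\Z)=\bigoplus_jH_2(Y_j;\Z)$, and $x(F)=\sum_j x(F_j)$. Each $F_j$ with $[F_j]\ne 0$ is then taut in $Y_j$ (otherwise a smaller competitor in $Y_j$ would yield a smaller competitor for $F$ in $Y$), and each $j$ with $[F_j]=0$ has $x(F_j)=0$ (replace $F_j$ by the empty surface). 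By the irreducible case (and, for those $j$ with $[F_j]=0$, by the nonvanishing of $\HFa(Y_j,\spinc_j)$ for some $\spinc_j$) we get, for every $j$, a $\SpinC$ structure $\spinc_j$ on $Y_j$ with $\langle c_1(\spinc_j),[F_j]\rangle=x(F_j)$ and $\HFaQ{Y_j,\spinc_j}\ne 0$. The Künneth formula for $\HFa$ under connected sum over the field $\Q$ gives $\HFaQ{Y,\spinc_1\#\cdots\#\spinc_n}\cong\bigotimes_j\HFaQ{Y_j,\spinc_j}\ne 0$, and since $\langle c_1(\spinc_1\#\cdots\#\spinc_n),[F]\rangle=\sum_j x(F_j)=2i$ this is a direct summand of $\HFaQ{Y,[F],i}$. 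Hence $\HFa(Y,[F],i)\otimes\Q\cong\HFaQ{Y,[F],i}\ne 0$.

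\emph{Main obstacle.} The real content is the algebraic passage from Novikov coefficients to $\Q$ coefficients in the irreducible case. A naive application of universal coefficients might suggest that a $\mathrm{Tor}$ term could cause cancellation; the point to get right is that only the \emph{free} part of the homology over the PID $R$—which is precisely what the Novikov (equivalently, $\Q(T)$) coefficients detect, and which survives \emph{any} base change—is needed to force nonvanishing over $\Q$, while the $R$-torsion can only produce extra classes. The subsidiary technical point, easy to overlook, is that one must first arrange $\omega$ to be integral so the relevant coefficient ring is an honest Laurent polynomial PID rather than the Novikov field; this is exactly where the openness of $U$ in Theorem~\ref{thm:TwistNorm} is used.
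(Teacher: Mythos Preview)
Your argument in the irreducible case is essentially the paper's proof, though you are more careful on two points the paper glosses over: arranging $\omega$ to be integral (so that $R=\Q[T,T^{-1}]$ is genuinely the relevant PID), and noting explicitly that nonvanishing over the field $\Lambda$ forces positive $R$-rank of $\uHFaR{Y,[F],i}$, which is what survives the passage to $\Q$. The paper simply writes down the two universal coefficients isomorphisms and asserts nonvanishing.

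Your reduction from the general case to the irreducible one is not in the paper: there ``taut'' is defined only for surfaces in irreducible $3$--manifolds, so the statement is implicitly restricted to that setting (and that is the only case invoked later, in the proof of Theorem~\ref{thm:Rank2}). Your extension via the prime decomposition and the K\"unneth formula for $\HFa$ is fine, but note one small gap: prime does not imply irreducible, since $S^1\times S^2$ is prime but reducible. You should treat any $S^1\times S^2$ summands separately; there $x(F_j)=0$ and $\HFa(S^1\times S^2,[F_j],0)\otimes\Q\cong\Q^2\ne 0$, so the argument still goes through.
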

\begin{proof}
Since $R=\Q[T,T^{-1}]$ is a PID, the universal coefficients
theorem \cite{Sp} implies
$$\uHFaL{-}\cong \uHFaR{-}\otimes_R \Lambda \bigoplus \mathrm{Tor}_{R}(\uHFaR{-},\Lambda).$$
Since $\Lambda$ is $R$ torsion-free the Tor term vanishes. The
non-vanishing result for the left-hand side (Theorem
\ref{thm:TwistNorm}) then implies that
$\uHFaR{Y,[F],\frac12x(F)}\otimes_R \Lambda$, and hence
$\uHFaR{Y,[F],\frac12x(F)}$, is non-zero. Applying the universal
coefficients theorem again,
$$\HFaQ{-}\cong \uHFaR{-}\otimes_R \Q \bigoplus \mathrm{Tor}_{R}(\uHFaR{-},\Q),$$ we find that the untwisted Floer homology with $\Q$ coefficients is non-trivial.  On the other hand, this latter group is isomorphic to
$$\HFaQ{-}\cong \HFa(-)\otimes_{\Z} \Q \bigoplus \mathrm{Tor}_{\Z}(\HFa(-),\Q).$$
The Tor term vanishes since $\Q$ has no $\Z$ torsion.  This yields the result.
\end{proof}

\begin{lem}\label{lem:HFinfty}
If $HF^{\infty}(Y,\mathfrak s)$ contains $k$ copies of $\mathbb
Z[U,U^{-1}]$ as direct summands, then $$\mathrm{rank}\:\HFa(Y,\mathfrak s)\ge
k.$$
\end{lem}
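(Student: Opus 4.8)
The plan is to relate $\HFa(Y,\spinc)$ to the kernel of the $U$-action on $\HFp(Y,\spinc)$, and then to produce $k$ independent classes in that kernel from the image of $\HFinf(Y,\spinc)$. Throughout, homology is taken with $\Z$-coefficients and an unadorned $\mathrm{rank}$ means rank over $\Z$.

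I would begin with the long exact sequence of $0\to\CFa\to\CFp\overset{\cdot U}{\to}\CFp\to0$. Up to an overall grading shift (irrelevant for ranks) it reads $\cdots\to\HFp\overset{\cdot U}{\to}\HFp\to\HFa\overset{\iota}{\to}\HFp\overset{\cdot U}{\to}\HFp\to\cdots$, so $\iota$ maps $\HFa(Y,\spinc)$ onto $\ker\bigl(U\colon\HFp(Y,\spinc)\to\HFp(Y,\spinc)\bigr)$. Since $\HFa(Y,\spinc)$ is finitely generated over $\Z$, this gives $\mathrm{rank}\,\HFa(Y,\spinc)\ge\mathrm{rank}\,\ker(U|_{\HFp})$, so it suffices to bound $\ker(U|_{\HFp})$ below by $k$.

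Next I would use the long exact sequence of $0\to\CFm\to\CFinf\to\CFp\to0$, with induced maps $j\colon\HFm\to\HFinf$ and $\pi\colon\HFinf\to\HFp$. Because $\Ta\cap\Tb$ is finite, $\CFm(Y,\spinc)$ is a finitely generated free $\Z[U]$-module, and since $\Z[U]$ is Noetherian, $\HFm(Y,\spinc)$ is finitely generated over $\Z[U]$. As $\CFinf=\CFm\otimes_{\Z[U]}\Z[U,U^{-1}]$ and localization is exact, $j$ is the localization map, so $M:=\Image(j)=\ker(\pi)$ is the quotient of $\HFm(Y,\spinc)$ by its $U$-power torsion: a finitely generated, $U$-torsion-free $\Z[U]$-module with $M[U^{-1}]=\HFinf(Y,\spinc)$. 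Exactness gives $\Image(\pi)\cong\HFinf(Y,\spinc)/M$, and since $U$ acts invertibly on $\HFinf(Y,\spinc)$, the kernel of $U$ on $\HFinf(Y,\spinc)/M$ equals $\{x\in\HFinf(Y,\spinc):Ux\in M\}/M$; multiplication by $U$ carries this isomorphically onto $M/UM$ (using that $M$ is $U$-torsion-free). As $\Image(\pi)\subseteq\HFp(Y,\spinc)$, we obtain $\mathrm{rank}\,\ker(U|_{\HFp})\ge\mathrm{rank}_{\Z}(M/UM)$.

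The only point needing an actual argument is the elementary claim that a finitely generated, $U$-torsion-free $\Z[U]$-module $M$ satisfies $\mathrm{rank}_{\Z}(M/UM)=\mathrm{rank}_{\Z[U]}M$. I would get this from a presentation $0\to K\to\Z[U]^n\to M\to0$: the submodule $K$ of a free module is torsion-free, so $K\otimes_{\Z}\Q$ embeds in $\Q[U]^n$ and is free over $\Q[U]$, giving $\mathrm{rank}_{\Z}(K/UK)=\mathrm{rank}_{\Z[U]}K$; on the other hand $\mathrm{Tor}^{\Z[U]}_1(M,\Z[U]/(U))=\ker(U\colon M\to M)=0$, so applying $-\otimes_{\Z[U]}\Z[U]/(U)$ to the presentation produces a short exact sequence $0\to K/UK\to\Z^n\to M/UM\to0$, whence $\mathrm{rank}_{\Z}(M/UM)=n-\mathrm{rank}_{\Z[U]}K=\mathrm{rank}_{\Z[U]}M$. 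Because localization preserves rank and $M[U^{-1}]=\HFinf(Y,\spinc)$, this last number equals $\mathrm{rank}_{\Z[U,U^{-1}]}\HFinf(Y,\spinc)$, which is at least $k$ since $\HFinf(Y,\spinc)$ contains $k$ copies of $\Z[U,U^{-1}]$ as a direct summand. Concatenating the inequalities yields $\mathrm{rank}\,\HFa(Y,\spinc)\ge k$. I do not expect a genuine obstacle here — the substance is the displayed commutative-algebra claim together with the bookkeeping that keeps all modules finitely generated, so that every $\Z$-rank in the chain is finite and additive in short exact sequences.
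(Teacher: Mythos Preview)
Your proof is correct. You use the same two long exact sequences the paper cites, so at the structural level the approaches coincide; the difference is in how the finiteness of $HF^-$ enters. The paper invokes the grading fact that $HF^-_i=0$ for $i$ sufficiently large: this makes $\pi\colon HF^\infty_i\to HF^+_i$ an isomorphism in high degrees, so the $k$ free $\Z[U,U^{-1}]$-towers of $HF^\infty$ appear verbatim in $HF^+$; since $HF^+$ is bounded below, each tower terminates and contributes to $\ker(U|_{HF^+})$, hence to $\widehat{HF}$. You replace this grading argument by a purely module-theoretic one: $HF^-$ is finitely generated over $\Z[U]$, its image $M$ in $HF^\infty$ is $U$-torsion-free with $M[U^{-1}]=HF^\infty$, and then your lemma $\mathrm{rank}_{\Z}(M/UM)=\mathrm{rank}_{\Z[U]}M$ does the work. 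Your route is more elaborate but has the virtue of being grading-independent; the paper's route is shorter once one is willing to track degrees. Either way the substance is the same.
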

\begin{proof}
This follows easily from the two exact sequences
$$\begin{CD}
\cdots\to {HF^-}\to {HF^{\infty}}\to{{HF^+}}\to\cdots,\\
\cdots\to {HF^+}\to{HF^+}\to{\HFa}\to\cdots,
\end{CD}
$$
and the fact that $HF^-_i=0$ when $i$ is sufficiently large.
\end{proof}

\subsection{Khovanov homology}
To a link $L\subset S^3$, Khovanov homology associates a
collection of bigraded abelian groups,  $Kh_{i,j}(L)$ \cite{Kh}.
The graded Euler characteristic of these groups is the Jones
polynomial, in the sense that $$(q+q^{-1})\cm J_L(q^2)= \sum_j
(\sum_i (-1)^i \mathrm{rank}\:Kh_{i,j}(L)) \cm q^j,$$ where
$J_L(q)$ is the Jones polynomial, and ``rank" is taken to mean the
rank as a $\Z$-module.  For the present purpose, it will not be
necessary to explain the precise details of Khovanov's
construction.  It suffices to say that the groups arise as the
(co)homology groups of a bigraded (co)chain complex,
$CKh_{i,j}(D)$, associated to a link diagram, $D$.  The complex is
obtained by applying a $(1+1)$ dimensional topological quantum
field theory to the cube of complete resolutions of $D$. The
$i$-grading is the cohomological grading i.e. the differential
increases this grading by one, while the $j$-grading is the
so-called quantum grading (corresponding to the variable in the
Jones polynomial).  In the original treatment, the differential
preserved the $j$-grading.  Lee considered a perturbation of this
differential which does not preserve the $j$-grading, but instead
makes $CKh_{i,j}(L)$ into a complex filtered by $j$ \cite{Lee}. This perturbation was useful for several purposes, most notably in
Rasmussen's combinatorial proof of Milnor's conjecture on the
unknotting number of torus knots \cite{RasSlice}.  A consequence
of Lee's work which will be useful for us is the
following
\begin{thm}{\rm\cite{Lee}}\label{thm:Lee}
Let $L\subset S^3$ be a link of $|L|$ components. Then $$\mathrm{rank}\: Kh(L)\ge 2^{|L|}.$$
\end{thm}

The main feature of Khovanov homology which we use is a connection with the Heegaard Floer invariants.

\begin{thm} {\rm\cite[Theorem~1.1]{OSzBrCov}}\label{thm:Branched}
Let $L \subset S^3$  be a link. There is a spectral sequence whose
$E_2$ term consists of $\widetilde{Kh}(\overline{L};\Z/2\Z)$,
Khovanov's reduced homology of the mirror of $L$, and which
converges to  $\HFa(\Sigma(L);\Z/2\Z)$, the Heegaard Floer
homology of the branched double cover of $L$.  \end{thm}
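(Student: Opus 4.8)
The plan is to reconstruct \os's original argument, realizing $\HFa(\Sigma(L);\F)$ as the abutment of a spectral sequence built from an iterated mapping cone of unoriented skein exact triangles. Fix a connected diagram $D$ for $\overline{L}$ with $n$ crossings and form its cube of resolutions: to each vertex $v\in\{0,1\}^n$ associate the complete resolution $D_v$, an unlink of $k(v)$ components. The essential input is an \emph{unoriented skein exact triangle} in Heegaard Floer homology: if three links agree outside a ball in which they look like a crossing and its two resolutions, their branched double covers are obtained from a fixed three-manifold with torus boundary by Dehn filling along three slopes that pairwise meet once, so \ons's surgery exact triangle yields a long exact sequence relating their $\HFa$ with $\F$ coefficients. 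Applying this at one crossing exhibits $\CFa$ of the relevant branched double cover as the mapping cone of a chain map between the complexes of the two resolved diagrams; iterating over all $n$ crossings presents $\CFa(\Sigma(L);\F)$, up to quasi-isomorphism, as the total complex of an $n$-dimensional hypercube of chain complexes and maps indexed by $\{0,1\}^n$.

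Making the phrase \emph{iterated mapping cone} rigorous is the technical core. One must choose Heegaard diagrams and almost-complex structures coherently across the whole cube and verify that the higher ``diagonal'' terms --- counts of holomorphic polygons associated to chains of cube edges --- assemble into a genuine filtered complex whose associated graded is the $E_1$ page and whose total homology computes $\HFa(\Sigma(L);\F)$. This is precisely \os's \emph{link surgeries spectral sequence}; its construction rests on the standard analytic package of gluing and degeneration theorems for holomorphic polygons in symmetric products, organized by the combinatorics of the $2^n$ vertices, and I would invoke it rather than redevelop it.

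With the hypercube in place, the $E_1$ page is $\bigoplus_{v\in\{0,1\}^n}\HFa(\Sigma(D_v);\F)$. Since an unlink of $k(v)$ components has branched double cover $\#^{k(v)-1}(S^1\times S^2)$, each summand is $\HFa(\#^{k(v)-1}(S^1\times S^2);\F)\cong H_*(T^{k(v)-1};\F)$, of rank $2^{k(v)-1}$ --- exactly the rank of the group at vertex $v$ in Khovanov's cube. The remaining step is to identify the $E_1$ differential with the Khovanov differential. An edge of the cube changes a single resolution, merging two circles into one or splitting one into two, and the corresponding map in the surgery triangle is induced by a two-handle cobordism between connected sums of copies of $S^1\times S^2$; one checks that, under the identification above, these cobordism maps coincide with the multiplication and comultiplication of the Frobenius algebra $V=\F[X]/(X^2)$ governing Khovanov's $(1+1)$-dimensional TQFT. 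Imposing a basepoint constraint on the Heegaard Floer side (a marked point on the link) produces the \emph{reduced} theory, so $E_2\cong\widetilde{Kh}(\overline{L};\F)$, and the appearance of the mirror is forced by the cyclic direction of the surgery triangle relative to the Khovanov differential. I expect this final identification --- matching the two-handle maps with the Frobenius structure maps uniformly over all $n$ edges, compatibly with the polygon counts of the previous step --- to be the main obstacle.
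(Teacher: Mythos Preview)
The paper does not prove this statement at all: Theorem~\ref{thm:Branched} is quoted verbatim from \cite[Theorem~1.1]{OSzBrCov} and used as a black box (its only role is to feed into Proposition~\ref{prop:ssbound}). So there is no ``paper's own proof'' to compare against. Your sketch is a faithful outline of \ons's original argument --- the link surgeries spectral sequence built from iterating the unoriented skein triangle over a cube of resolutions, with the $E_1$ page identified via $\HFa(\#^{k-1}S^1\times S^2)$ and the edge maps matched to the Frobenius algebra structure --- and as such it is appropriate, but in the context of this paper no proof is expected or supplied.
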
 The
reduced Khovanov homology is a variant of Khovanov homology
defined via a chain complex, $\widetilde{CKh}$ which has half the
rank (taken in any coefficient ring) of $CKh(L)$.  In the case of
$\Z/2\Z$ coefficients, there is little difference between the
reduced and ordinary theories.  Indeed, \begin{equation}
\label{eq:Red} \mathrm{Kh}(L;\Z/2\Z) \cong
\widetilde{\mathrm{Kh}}(L;\Z/2\Z)\otimes V,\end{equation} where
$V$ is the rank $2$ vector space over $\Z/2\Z$ obtained as the
Khovanov homology of the unknot (see, for instance,
\cite{Shumakovitch2004}).

Packaging all of this, the result we use is the following:
\begin{prop}\label{prop:ssbound}
Let $L\subset S^3$ be a link of $|L|$ components and $\Sigma(L)$ denote its branched double cover.
$$\mathrm{rank}_{\Z/2\Z}\: Kh(L;\Z/2\Z)\ge 2\cm \mathrm{rank}_\Z\: \HFa(\Sigma(L))$$
\end{prop}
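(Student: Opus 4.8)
The plan is to chain together the three facts assembled in this subsection: the $\Z/2\Z$ reduced-versus-unreduced comparison~\eqref{eq:Red}, the Ozsv\'ath--Szab\'o spectral sequence of Theorem~\ref{thm:Branched}, and the universal coefficients theorem relating $\HFa(\Sigma(L))$ over $\Z$ to $\HFa(\Sigma(L);\Z/2\Z)$. First I would observe that since a spectral sequence can only decrease rank from one page to the next, Theorem~\ref{thm:Branched} gives
$$\mathrm{rank}_{\Z/2\Z}\widetilde{Kh}(\overline L;\Z/2\Z) = \mathrm{rank}_{\Z/2\Z}(E_2) \ge \mathrm{rank}_{\Z/2\Z}(E_\infty) = \mathrm{rank}_{\Z/2\Z}\HFa(\Sigma(L);\Z/2\Z).$$
Next I would remove the mirror and the "reduced" adjective: Khovanov homology over $\Z/2\Z$ has $\mathrm{rank}_{\Z/2\Z}Kh(L;\Z/2\Z)=\mathrm{rank}_{\Z/2\Z}Kh(\overline L;\Z/2\Z)$ (the mirror reverses the $i$-grading), and~\eqref{eq:Red} gives $\mathrm{rank}_{\Z/2\Z}Kh(\overline L;\Z/2\Z)=2\cdot\mathrm{rank}_{\Z/2\Z}\widetilde{Kh}(\overline L;\Z/2\Z)$ since $\dim_{\Z/2\Z}V=2$. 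Combining these, $\mathrm{rank}_{\Z/2\Z}Kh(L;\Z/2\Z)\ge 2\cdot\mathrm{rank}_{\Z/2\Z}\HFa(\Sigma(L);\Z/2\Z)$.

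It then remains to compare the $\Z/2\Z$ Floer rank with the integral one, i.e. to show $\mathrm{rank}_{\Z/2\Z}\HFa(\Sigma(L);\Z/2\Z)\ge\mathrm{rank}_\Z\HFa(\Sigma(L))$. This is exactly the content of the universal coefficients theorem applied to the integral chain complex $\CFa(\Sigma(L))$: one has a (non-canonically split) short exact sequence
$$0\to \HFa(\Sigma(L))\otimes_\Z\Z/2\Z \to \HFa(\Sigma(L);\Z/2\Z)\to \mathrm{Tor}_\Z(\HFa(\Sigma(L)),\Z/2\Z)\to 0,$$
so $\mathrm{rank}_{\Z/2\Z}\HFa(\Sigma(L);\Z/2\Z)$ equals $\mathrm{rank}_\Z\HFa(\Sigma(L))$ plus the number of $\Z$-summands and $2$-torsion summands of $\HFa(\Sigma(L))$, each of which contributes a nonnegative amount. (Here $\mathrm{rank}_\Z$ means the free rank of the finitely generated abelian group $\HFa(\Sigma(L))$, consistent with the usage in Lemma~\ref{lem:HFinfty}.) In particular the $\Z/2\Z$-rank is at least the free $\Z$-rank, which is what we need. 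Stringing the two inequalities together yields the proposition.

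I do not expect any genuine obstacle here: every ingredient is quoted from the literature, and the only arithmetic is the bookkeeping of ranks under a spectral sequence, under~\eqref{eq:Red}, and under universal coefficients. The one point to state carefully is that "rank" on the two sides of the inequality means different things ($\dim_{\Z/2\Z}$ of a vector space on the Khovanov side, free abelian rank on the Floer side), so that the universal coefficients step is what bridges them; this mild notational care is the entirety of the subtlety.
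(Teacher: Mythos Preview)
Your proposal is correct and follows essentially the same route as the paper: spectral sequence inequality from Theorem~\ref{thm:Branched}, mirror duality for Khovanov homology, the reduced-versus-unreduced relation~\eqref{eq:Red}, and universal coefficients to pass from $\Z/2\Z$ to $\Z$ on the Floer side. The only cosmetic difference is the order in which you remove the mirror and apply~\eqref{eq:Red} (you apply~\eqref{eq:Red} to $\overline L$ and then use mirror duality for the unreduced theory, whereas the paper cites mirror duality at the reduced level and then applies~\eqref{eq:Red} to $L$); this is immaterial.
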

\begin{proof}   Theorem \ref{thm:Branched} immediately yields
$$ \mathrm{rank}_{\Z/2\Z} \widetilde{Kh}(\overline{L};\Z/2\Z) \ge \mathrm{rank}_{\Z/2\Z} \HFa(\Sigma(L);\Z/2\Z).$$
The left-hand side of this inequality is equal to
$\mathrm{rank}_{\Z/2\Z} \widetilde{Kh}({L};\Z/2\Z)$ by a duality
theorem for Khovanov homology under
taking mirror images \cite[Corollary 11]{Kh}. We have $$\mathrm{rank}_{\Z/2\Z}
\HFa(\Sigma(L);\Z/2\Z)\ge \mathrm{rank}_{\Z} \HFa(\Sigma(L)),$$ by
the universal coefficients theorem. Multiplying by $2$ and
combining with Equation \eqref{eq:Red} yields the desired
inequality.
\end{proof}

\section{Manifolds with rank $2$ Heegaard Floer homology}
In this section we prove our theorem characterizing manifolds with
rank $2$ Floer homology.  The basic idea of the proof is simple.
We first use the rank assumption to show that the only non-trivial
class in $H_2(Y;\Z)$ is represented by a torus.  The rank being this
small further implies, by work of Ai and the second author
\cite{AiNi}, that this torus is a fiber in a fibration of the
three-manifold over the circle.  The scarcity of three-manifolds
which fiber in this way then pins down the manifold exactly.  We
begin.

\begin{proof}[Proof of Theorem~\ref{thm:Rank2}]Suppose $Y$ is an irreducible closed  three-manifold with $b_1(Y)>0$ and
$\mathrm{rank}\:\HFa(Y)=2$.

\vspace{5pt}\noindent{\bf Claim 1.} {\sl$Y$ contains a
non-separating torus.}\vspace{5pt}

Since $b_1(Y)>0$, there exists a closed connected surface
$F\subset Y$, such that $[F]\ne0\in H_2(M)$ and $F$ is Thurston
norm minimizing. $F$ is not a sphere since $Y$ is irreducible so,
in particular, $F$ is taut. By Theorem~\ref{thm:UnTwistNorm},
$\HFa(Y,[F],g-1)\ne0$, where $g$ is the genus of $F$. A symmetry
property of Floer homology further implies that
$\HFa(Y,[F],1-g)\ne0$ \cite[Theorem~2.4]{OSzAnn2}. Note that since
$b_1(Y)>0$, \cite[Proposition~5.1]{OSzAnn2} implies
$\chi(\HFa(Y,\mathfrak s))=0$. In particular $\HFa(Y,[F],g-1)$ and
$\HFa(Y,[F],1-g)$ each have rank at least $2$. If $g>1$, then
$$\mathrm{rank}\:\HFa(Y)\ge\mathrm{rank}\:\HFa(Y,[F],g-1)+\mathrm{rank}\:\HFa(Y,[F],1-g)\ge4,$$
which is impossible. So $F$ must be a torus.

\vspace{5pt}\noindent{\bf Claim 2.} {\sl $Y$ is a
 torus bundle over the circle.}\vspace{5pt}

The tool for proving this claim will be the main theorem of
\cite{AiNi}, which indicates that Floer homology detects torus
fibrations:
\begin{thm}{\rm \cite[Theorem~1.2]{AiNi} }Suppose $F\subset Y$ is an embedded torus and there exists $\omega\in H_1(Y;\Z)$ satisfying $\omega\cm [F]\ne 0$, and for which
$$ \mathrm{rank}_\Lambda \uHFpL{Y}=1.$$
Then $Y$ fibers over the circle with fiber $F$.
\end{thm}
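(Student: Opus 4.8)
The plan is to cut $Y$ open along $F$ and to show that the rank-one hypothesis forces the complement to be a product $T^{2}\times I$, which is precisely the condition that $Y$ fibers over the circle with fiber $F$. The three ingredients are: a reduction (via the connected-sum formula) to the case that $Y$ is irreducible and $F$ is a taut torus; a localization of $\uHFpL{Y}$ in the $\SpinC$ structures trivial on $[F]$, using the adjunction inequality and $\SpinC$-conjugation; and an identification of the resulting perturbed group with the sutured Floer homology of the complement of $F$, to which one applies Juh\'asz's theorem that sutured Floer homology detects product sutured manifolds (itself built on Gabai's sutured-manifold hierarchy).

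First I would arrange that $Y$ is irreducible and $F$ is a taut torus. Since $\omega\cm[F]\ne0$, the class $[F]$ is nonzero, so $b_{1}(Y)>0$ and $F$ is non-separating. A short argument with the twisted connected-sum (K\"unneth) formula rules out a nontrivial decomposition $Y=Y_{1}\#Y_{2}$: if $\omega$ is nontrivial on an $S^{1}\times S^{2}$ summand then $\uHFpL{S^{1}\times S^{2}}=0$ (in $\Lambda$ the element $T-1$ is a unit), which kills $\uHFpL{Y}$; otherwise any nontrivial summand contributes a tower or extra generators, so $\mathrm{rank}_{\Lambda}\uHFpL{Y}\ne1$. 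Hence $Y$ is irreducible. Then $x([F])\le x(F)=0$, so $[F]$ is Thurston-norm minimizing; and a torus in an irreducible manifold with $[F]\ne0$ is incompressible, since a compression (together with irreducibility) would exhibit $F$ as the boundary of a handlebody. Thus $F$ is taut.

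Next I would localize and pass to the complement. The adjunction inequality for perturbed Floer homology, applied to $F$ and to $F$ with the reversed orientation, gives $\uHFpL{Y,\spinc}=0$ whenever $\langle c_{1}(\spinc),[F]\rangle\ne0$ (using $2g(F)-2=0$), so $\uHFpL{Y}=\uHFpL{Y,[F],0}$, of $\Lambda$-rank one. Now let $M=Y\setminus\mathrm{int}\,N(F)$, a compact orientable manifold whose boundary is two tori; tautness of $F$ makes it a taut sutured manifold, and after the standard step of cutting along a vertical annulus joining the two boundary tori one obtains a balanced taut sutured manifold $M'$, which is a product sutured manifold if and only if $M\cong T^{2}\times I$, if and only if $Y$ is the mapping torus of the gluing $F_{+}\to F_{-}$, i.e.\ fibers over $S^{1}$ with fiber $F$. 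The technical heart is an isomorphism of the form
$$\uHFpL{Y,[F],0}\;\cong\;\mathrm{SFH}(M')\otimes_{\Z}\Lambda,$$
obtained from the relation---developed by Ozsv\'ath--Szab\'o, Ni, and Juh\'asz---between the twisted Heegaard Floer homology of a closed manifold containing a non-separating norm-minimizing surface and the sutured Floer homology of the surface complement; the $\omega$-twisting is exactly the device that records the $H_{1}$-class dual to $F$. Granting this, $\mathrm{rank}_{\Z}\mathrm{SFH}(M')=1$, and Juh\'asz's product-detection theorem (whose proof uses Gabai's hierarchy: a taut non-product sutured manifold admits a taut decomposition strictly dropping the rank of $\mathrm{SFH}$) forces $M'$ to be a product. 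Therefore $Y$ fibers over the circle with fiber $F$.

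The step I expect to be the main obstacle is the isomorphism $\uHFpL{Y,[F],0}\cong\mathrm{SFH}(M')\otimes_{\Z}\Lambda$: one must set up a careful dictionary between the perturbed Heegaard Floer invariant of the closed manifold $Y$ and the sutured invariant of the manifold obtained by cutting it along $F$ (and then along the vertical annulus), and check that the $\omega$-twisting introduces no extra multiplicative factor in the rank. This is also precisely the point at which twisted (Novikov) coefficients are indispensable: with untwisted coefficients the vanishing $\chi(\HFa(Y,\spinc))=0$, valid whenever $c_{1}(\spinc)$ is torsion, forces every relevant group to have even rank, so no rank-one criterion can hold at all.
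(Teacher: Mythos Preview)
The paper does not prove this theorem; it is quoted as \cite[Theorem~1.2]{AiNi} and used as a black box in the proof of Claim~2 of Theorem~\ref{thm:Rank2}. There is therefore no proof in the present paper to compare your proposal against.

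That said, your outline is in the spirit of how fibration-detection results in Heegaard Floer theory are established (reduce to the irreducible case, localize via the adjunction inequality, pass to the sutured complement, and invoke a product-detection theorem), and it correctly identifies the crux as the comparison between the perturbed invariant of the closed manifold and the sutured invariant of the cut-open manifold. The actual argument in \cite{AiNi} is carried out directly in the twisted/perturbed setting rather than by literally reducing to Juh\'asz's $\mathrm{SFH}$ and his product-detection theorem; the isomorphism you wrote, $\uHFpL{Y,[F],0}\cong\mathrm{SFH}(M')\otimes_{\Z}\Lambda$, is not established in that form and would require justification. If you want to see the details, consult \cite{AiNi} and the closely related arguments in \cite{NiFibred,NiClosed}.
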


Note that to be consistent, we have stated this theorem in terms
of coefficients twisted by  $\omega\in H_1(Y;\Z)$ rather than
$\omega\in H^2(Y;\R)$ as in \cite{AiNi}.  The definition of the
modules involved, together with Poincar{\'e} duality, shows that
the chain complexes are isomorphic.  Also recall that $\uHFaL{Y}$
denotes the sum of Floer groups over all $\SpinC$ structures.

We wish to apply the above theorem.  By
Theorem~\ref{thm:TwistNorm}, we can choose a homology class
$\omega\in H_1(Y)$ with $\omega\cdot[F]\ne0$ and
$\uHFpL{Y,[F],0}\ne0$.  Moreover, an application of the adjunction
inequality \cite[Theorem~7.1]{OSzAnn2} shows that
$\uHFpL{Y,[F],0}=\uHFpL{Y}$. More precisely, the adjunction
inequality adapted to twisted coefficients tells us that
$\uHFpL{Y,\spinc}=0$ for any $\SpinC$ structure satisfying
$\langle c_1(\spinc), [F]\rangle \ne 0$. Thus it remains to show
that our assumption $\mathrm{rank}_\Z \HFa(Y)=2$ implies that
$\mathrm{rank}_\Lambda \:\uHFpL{Y,[F],0}=1$.  This will follow
easily from the universal coefficients theorem and the exact
sequence relating $\uHF^+$ to $\widehat{\uHF}$.

As an intermediary, let us consider the $\omega$-twisted Heegaard
Floer homology $\uHFaR{Y,[F],0}$ (recall that $R=\Q[T,T^{-1}]$).
As in Subsection \ref{subsec:twisted} we have two natural
$R$-modules: the trivial module $\Q$ and the universal Novikov
ring $\Lambda$. Correspondingly, we have the untwisted Heegaard
Floer homology $\HFaQ{Y,[F],0}$ and the $\omega$-perturbed Floer
homology $\uHFaL{Y,[F],0}$.

We can apply the universal coefficients
theorem to get
$$\HFaQ{Y,[F],0}\cong \uHFaR{Y,[F],0}\otimes_{R}\mathbb Q\bigoplus
\mathrm{Tor}_{R}(\uHFaR{Y,[F],0},\mathbb Q).$$ By assumption, we have
$\HFaQ{Y,[F],0}\cong\mathbb Q^2$, so
$$\mathrm{rank}_R\uHFaR{Y,[F],0}\le2.$$

Again by the universal coefficients theorem, we have
$$\uHFaL{Y,[F],0}\cong \uHFaR{Y,[F],0}\otimes_{R}\Lambda\bigoplus
\mathrm{Tor}_{R}(\uHFaR{Y,[F],0},\Lambda).$$ Since
$\Lambda$ is $R$-torsion free, we have
$\mathrm{Tor}_{R}(\uHFaR{Y,[F],0},\Lambda)=0$. Thus
$$\mathrm{rank}_{\Lambda}\uHFaL{Y,[F],0}\le2.$$

Having bounded the rank of $\underline{\HFa}$, recall the exact sequence
$$\begin{CD}
\cdots\to { \underline{HF}^+}@>U>> {\underline{HF}^+}\to {\underline{\HFa}}\to\cdots.
\end{CD}
$$
The techniques of \cite{Lek} show that $U=0$. Since $\uHFpL{Y,[F],0}\ne0$, we must have
$\mathrm{rank}_{\Lambda}\uHFpL{Y,[F],0}=1$, which completes the proof of the claim.

\vspace{5pt}\noindent{{\bf Claim 3.} $Y$ is obtained by zero
surgery on the trefoil knot.}\vspace{5pt}

Since $Y$ is a torus bundle, $b_1(Y)\le3$. If $b_1(Y)=3$, then
$Y=T^3$, for which $\HFa$ is known to be isomorphic to $\mathbb Z^6$
\cite[Proposition~8.4]{OSzAbGr}. If $b_1(Y)=2$ or $H_1(Y;\mathbb
Z)$ contains torsion, \cite[Theorem~10.1]{OSzAnn2} and
Lemma~\ref{lem:HFinfty} imply that
$\mathrm{rank}\:\HFa(Y)\ge4$. Thus
$H_1(Y;\mathbb Z)\cong\mathbb Z$.

Let $\tensor A\in SL(2,\mathbb Z)$ be the matrix representing the
monodromy of the torus bundle. In order to have $H_1(Y;\mathbb Z)\cong\mathbb
Z$, we must have $\det(\tensor A-\tensor I)=\pm1$ (by Mayer--Vietoris), so
$\mathrm{trace}\:\tensor A=1\;\text{or}\;3$. Up to conjugacy in
$SL(2,\mathbb Z)$, there are only three such matrices
\cite[21.15]{Z}, which corresponding to the zero surgeries on the
two trefoil knots and on the figure-8 knot. The first two
manifolds have $\HFa\cong\mathbb Z^2$ while the last one
has $\HFa\cong\mathbb Z^4$ \cite[Section~8]{OSzAbGr}.
\end{proof}

\section{The zero surgery on the trefoil as a double branched cover}
\label{sec:whichlinks}

For a link $L\subset S^3$, we let $\Sigma(L)$ denote the branched
double cover of $S^3$, branched along $L$. We will denote the
manifold obtained by zero surgery on the trefoil by $M$.  In this
section, we classify the links for which $\Sigma(L)\cong M$.
It turns out there are only two.

\begin{prop}\label{prop:TreDoub}
Suppose $\Sigma(L)\cong M,$ the manifold obtained by $0$--surgery
on the trefoil knot. Then $L$ is isotopic to one of the two links,
$H_{1,3}$ or $H_{\sd 1,\sd 3}$, pictured in
Figure~\ref{fig:hopfcable}.
\end{prop}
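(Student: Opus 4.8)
The plan is to use the key feature of $M=S^3_0(3_1)$ that it carries a flat (Euclidean) metric: equivalently, $M$ is the torus bundle over $S^1$ with finite monodromy $A$ of trace $1$ and order $6$, equivalently the Seifert fibered space over the Euclidean $2$--orbifold $S^2(2,3,6)$ with Euler number zero. A homeomorphism $\Sigma(L)\cong M$ is the same datum as a smooth, orientation-preserving involution $\tau$ of $M$ whose fixed-point set is a $1$--manifold and whose quotient is $S^3$; under such a presentation the branch link is the image of $\mathrm{Fix}(\tau)$ in $M/\tau=S^3$. So the proposition amounts to classifying, up to conjugacy, the involutions of $M$ with these two properties and reading off their branch sets.

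First I would reduce to isometries. By the geometrization of finite group actions on Seifert fibered and flat three-manifolds (Meeks--Scott), any finite-order diffeomorphism of $M$ is conjugate to an isometry of a flat metric, so it suffices to enumerate the involutions of $\mathrm{Isom}(M)$. One can do this algebraically --- writing $M=\mathbb{R}^3/\Gamma$ and computing $\mathrm{Isom}(M)=N(\Gamma)/\Gamma$, then listing its conjugacy classes of order-two elements --- or, more geometrically, observe that the fiber torus is the unique Thurston-norm-minimizing torus of $M$, so any involution preserves the Seifert fibration up to isotopy and hence covers an involution of the base orbifold $S^2(2,3,6)$. Since an orientation-preserving periodic homeomorphism of $S^2$ other than the identity fixes exactly two points, while the three cone points of $S^2(2,3,6)$ have distinct orders and so are each fixed, the induced involution on the base is either the identity or an orientation-reversing reflection in the circle through the three cone points. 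Either way the classification reduces to a short, explicit list of involutions.

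Next, for each involution $\tau$ on this list I would compute the quotient $M/\tau$ --- as a Seifert fibered space when the base map is trivial, and as a branched quotient (with fixed set and branch link visible in the Montesinos picture) when the base map is the reflection --- and discard those with $M/\tau\not\cong S^3$. Homological bookkeeping ($H_1(M)\cong\mathbb{Z}$, the linking form, or directly the Seifert invariants of the quotient) eliminates all but the reflection cases, which yield $S^3$. Either because $M$ admits an orientation-reversing self-homeomorphism (as $A$ is conjugate to $A^{-1}$ in $SL(2,\mathbb Z)$) or simply because we work up to unoriented homeomorphism, the surviving involutions come in a mirror pair, and I expect exactly two, with branch loci the mirror links $H_{1,3}$ and $H_{\sd 1,\sd 3}$. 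To identify these links concretely one reads off the quotient orbifold data: since the trefoil is the two-bridge knot $b(3,1)$ and is strongly invertible, zero-framed surgery combined with the Montesinos trick exhibits $M$ as the double branched cover of a Montesinos link assembled from the tangle replacing the trefoil's unknotting arc and the tangle recording the zero framing, and this link is the one drawn in Figure~\ref{fig:hopfcable}.

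The main obstacle will be the middle step: enumerating all conjugacy classes of involutions of the flat manifold without omission, and for each computing its quotient orbifold and deciding whether the underlying space is $S^3$ --- particularly the involutions that do not preserve a fiber torus setwise but interchange it with a parallel copy, where one must analyze the induced action on the associated $T^2\times I$. A secondary point deserving care is the precise form of the geometrization of finite group actions being invoked: one should pin down which statement is used and verify that it applies to a flat manifold, since that is where the whole argument lives.
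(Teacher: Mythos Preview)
Your starting move---invoking Meeks--Scott to make the covering involution an isometry of a flat metric---is exactly what the paper does. From there, however, the paper takes a rather different and more direct route than your proposed enumeration of conjugacy classes of involutions via the Seifert structure over $S^2(2,3,6)$.

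Instead of classifying isometric involutions abstractly, the paper works with the \emph{torus-bundle} structure. It produces a totally geodesic torus $T$ generating $H_2(M)$, shows (by an elementary argument with parallel planes in $\mathbb{E}^3$) that one can arrange $\rho(T)\cap T=\emptyset$ and $\rho_*[T]=-[T]$, and hence that $T\cup\rho(T)$ cuts $M$ into two $\rho$-invariant pieces $C_1,C_2\cong T^2\times I$. The quotient $V_i=\pi(C_i)$ is then shown to be a solid torus (using Gabai's singular-norm theorem), so $S^3=V_1\cup V_2$ is a genus-one Heegaard splitting. A short argument with the flat foliation of $C_i$ by parallel tori $R_t$ forces the fixed set in $C_i$ to lie on the middle torus $R_{1/2}$, and an Euler-characteristic count shows $L\cap V_i$ is a closed $2$-braid. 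Thus $L=H_{m,n}$ for some $m,n$. Finally the paper computes the monodromy matrix of $\Sigma(H_{m,n})$ and reads off $mn=3$ from the trace condition.

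Your approach is in principle viable, but note two points. First, your sentence ``the fiber torus is the unique Thurston-norm-minimizing torus of $M$, so any involution preserves the Seifert fibration'' conflates the torus-bundle fiber with the circle fibers of the Seifert structure; preservation of the former does not by itself give preservation of the latter, and you would instead want to invoke uniqueness of the Seifert fibration on $M$ directly. Second, the enumeration you flag as the ``main obstacle'' really is the heart of the matter, and you have not carried it out: in particular the case where the base involution is the identity (so $\tau$ acts fiberwise) needs to be analyzed and excluded, and in the reflection case you still have to identify the quotient and branch set explicitly. The paper's torus-decomposition argument sidesteps this enumeration entirely and lands directly on the one-parameter family $H_{m,n}$, which is what makes it cleaner.
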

\begin{rem} We consider $M$ as an unoriented manifold, and hence the links are specified up to taking mirror images.
\end{rem}

We will prove the proposition by a geometric argument, which we now
sketch. The argument begins with the observation that $M$ admits a
Euclidean geometric structure.  Existing results about group
actions on such manifolds will show that the involution presenting
$M$ as a branched double cover can be assumed to be an isometry.
Analyzing this situation, we will show that the involution
restricts to an involution on the two pieces of the decomposition
of $M$ as a torus bundle $$M\cong T^2\times I \underset{\tensor A}\cup
T^2\times I.$$ Using this fact, we can show that $L$ must result
from $2$-cabling each component of the Hopf link, as shown in
Figure \ref{fig:hopfcable}.  An analysis of $H_1(\Sigma(L);\Z)$
for such $L$ then specifies the link exactly. With the general
idea in place, we begin.

\begin{figure}
 \psfrag{n}{$n$}
 \psfrag{m}{$m$}
 \psfrag{1}{$\ 1$}
  \psfrag{-1}{$\ \sd 1$}
\psfrag{=}{$\sim$} \psfrag{H(n,m)}{$H_{n,m}$}
\begin{center}
\includegraphics[width=250pt]{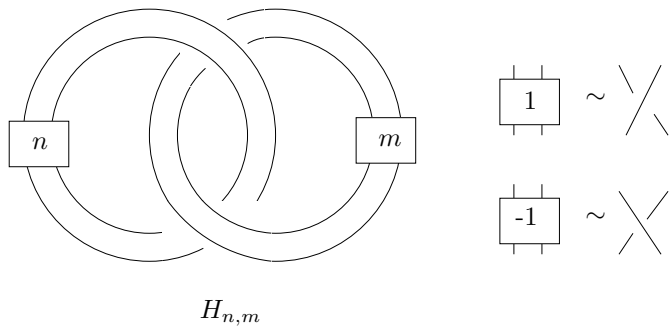}
\caption{\label{fig:hopfcable}A satellite of the Hopf link formed
by $2$-cabling each of its components. The numbers  $n$ and $m$
indicate the number of  crossings, according to the convention on
the right.  }
\end{center}
\end{figure}

\begin{proof}[Proof of Proposition~\ref{prop:TreDoub}]

Suppose $M$ is homeomorphic to $\Sigma(L)$. Let $\pi\co M\to S^3$
be the branched covering map, and let $\rho\co M\to M$ be the deck
transformation, which is an involution. Then $\pi^{-1}(L)$ is the
set of fixed points of $\rho$.  The following lemma captures the
geometry of our setup.

\begin{lem}
There exists a flat metric on $M$ which is preserved by $\rho$.
With respect to this metric, there exists a totally geodesic
(embedded) torus $T$ representing a generator of $H_2(M)\cong \Z$.
\end{lem}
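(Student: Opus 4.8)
The plan is to read off the flat structure directly from the torus-bundle description of $M$, to promote the involution $\rho$ to an isometry using the theory of finite group actions on Euclidean manifolds, and then to locate the totally geodesic torus inside the universal cover. First I would recall from Claim 3 that $M=S^3_0(3_1)$ fibers over $S^1$ with fiber $T^2$ and monodromy a matrix $\tensor A\in SL(2,\Z)$ of trace $1$; such a matrix has order $6$ in $SL(2,\Z)$. Averaging an arbitrary flat metric on $T^2$ over the cyclic group $\langle\tensor A\rangle$ (each $(\tensor A^k)^*$ of a constant metric is again constant, hence flat, and so is the average) produces a flat metric for which the linear diffeomorphism $\tensor A$ is an isometry; equipping $T^2\times\R$ with the product of this metric and $dt^2$ then makes the generator $(x,t)\mapsto(\tensor A x,\,t+1)$ of the deck group act by isometries, and the quotient is a flat metric on $M$. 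Thus $M$ is one of the closed orientable Euclidean $3$--manifolds, the one with holonomy $\Z/6$.

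Next I would upgrade the involution $\rho$ to an isometry. Since $\rho$ has order $2$, it generates a smooth $\Z/2$ action on the Euclidean manifold $M$, and by the geometrization of finite group actions on geometric $3$--manifolds — for flat manifolds this is essentially classical, resting on Bieberbach's rigidity theorems, and in general it is the theorem of Meeks and Scott — the action of $\rho$ is conjugate, by a diffeomorphism $h$ of $M$, to an action by isometries of some flat metric $g_0$. Replacing $g_0$ by $h^{*}g_0$ we may assume $\rho$ itself is an isometry of a flat metric $g$ on $M$; then $\pi^{-1}(L)=\mathrm{Fix}(\rho)$ is automatically a disjoint union of closed geodesics, a fact to be used in later steps. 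I expect this middle step to be the main obstacle: one must invoke the correct rigidity statement for finite-order diffeomorphisms of aspherical geometric $3$--manifolds (in particular that such a diffeomorphism is determined up to conjugacy by its outer action on $\pi_1$), whereas the remaining arguments are elementary.

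Finally I would produce the totally geodesic torus, and the point is that it exists for \emph{every} flat metric on $M$, so the metric $g$ just obtained will serve. Write $(M,g)=\R^3/\Gamma$ for a Bieberbach group $\Gamma$ with translation lattice $\Lambda\cong\Z^3$ and holonomy $\Gamma/\Lambda\cong\Z/6$ acting on $\R^3$ as the rotation by $\pi/3$ about a line $\ell$, and set $P=\ell^{\perp}$. Since this rotation preserves $\Lambda$ and fixes no nonzero vector of $P$, any nonzero $v\in\Lambda\cap P$ gives two independent lattice vectors $v$ and its rotate, while $\Lambda\cap P=0$ would embed $\Lambda$ into $\R^3/P\cong\R$; hence $\Lambda\cap P$ is a rank--$2$ lattice and $P/(\Lambda\cap P)$ is a flat $2$--torus. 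In fact $P$ and its $\Lambda$--translates are exactly the fibers of the bundle $M\to S^1$ (the composite of the orthogonal projection $\R^3\to\ell$ with the reduction to $S^1$), so each descends to an embedded totally geodesic torus $T\subset M$. To see $T$ generates $H_2(M)\cong\Z$, use the Wang sequence of the bundle: $\det\tensor A=1$ makes $H_2(T^2)\to H_2(M)$ injective, and $\det(\tensor A-\tensor I)=\pm1$ forces the cokernel $\ker(\tensor A-\tensor I\colon H_1(T^2)\to H_1(T^2))$ to vanish, so $H_2(T^2)\xrightarrow{\ \cong\ }H_2(M)$ and $T$ is a generator, as claimed.
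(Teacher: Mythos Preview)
Your argument is correct, but it diverges from the paper's in how the totally geodesic torus is produced. Both proofs obtain the $\rho$--invariant flat metric the same way, by invoking Meeks--Scott \cite{MS} (your preliminary ``averaging'' paragraph is not really needed once you cite that theorem). For the torus, however, the paper argues variationally: it takes a least--area map $f\colon T^2\to M$ representing a generator of $H_2(M)$, uses the Freedman--Hass--Scott result \cite{FHS} to conclude that $f$ is an embedding, and then observes that in a flat ambient metric a minimal torus has nonpositive Gauss curvature, which Gauss--Bonnet forces to vanish identically, so both principal curvatures vanish and $T$ is totally geodesic. You instead read the torus off directly from the Bieberbach description $(M,g)=\R^3/\Gamma$: the plane $P$ orthogonal to the holonomy axis descends to a flat fiber of the induced map $M\to S^1$, and the Wang sequence identifies it with a generator of $H_2$. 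Your route avoids minimal--surface theory entirely and has the pleasant side effect of exhibiting $T$ explicitly as a bundle fiber, which is exactly the picture used in the paper's subsequent lemma; the paper's route is shorter to state and uses off--the--shelf theorems rather than any case analysis of the crystallographic group.

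One small wrinkle: the sentence ``$\Lambda\cap P=0$ would embed $\Lambda$ into $\R^3/P\cong\R$'' is not by itself a contradiction, since $\Z^3$ does embed in $\R$ as an abstract group. The intended contradiction is that $P$ would then inject into the compact manifold $\R^3/\Lambda$; alternatively, for any $v\in\Lambda$ not on the axis $\ell$ one has $0\ne v-Rv\in\Lambda\cap P$, which immediately gives the rank--two lattice you need.
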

\begin{proof}
$M$ is a torus bundle over $S^1$ with a $6$--periodic monodromy,
so the $6$--fold cyclic cover of $M$ is $T^3$.  Thus $M$ admits a
Euclidean structure. According to \cite[Theorem~2.1]{MS}, there
exists a flat metric on $M$ with respect to which $\rho$ is an isometry.

Suppose $f\co T^2\to M$ is a least area map representing a
generator of $H_2(M)$.   \cite[Theorem~5.1]{FHS} implies that
either (i) $f$ is an embedding, or (ii) $f$ double covers an
embedded one-sided surface in $M$.  The latter is ruled out  by
the fact that $f_*([T^2])\ne 0\in H_2(M)$, hence $f$ is an
embedding  (strictly speaking, to apply the theorem we must also
verify that $M$ is irreducible and contains no $\R P^2$, but this
follows easily from the fact that $M$ is a torus bundle).   Let
$T$ denote the image of $f$.  Since $T$ is area minimizing, it is
a minimal surface, and hence the mean curvature vector vanishes
identically. Since $M$ is flat,  the Gaussian curvature  of $T$ is
the product of its principal curvatures, and is non-positive since
the mean curvature (the average of the principal curvatures)
vanishes. The Gauss--Bonnet Theorem then implies that the Gaussian
curvature of $T$ is $0$ everywhere, and hence the principal
curvatures also vanish everywhere, so $T$ is totally geodesic.
\end{proof}

The geometry at hand tightly constrains the behavior of the
covering involution with regard to the torus.  We have the
following

\begin{lem}
In the previous lemma, one can assume that the totally geodesic torus satisfies $\rho(T)\cap T=\emptyset$
and $[\rho(T)]=-[T]\in H_2(M)$.
\end{lem}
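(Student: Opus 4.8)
The plan is to understand how the isometric involution $\rho$ interacts with the space of totally geodesic tori in the flat manifold $M$, and to exploit the torus bundle structure to find a $\rho$-equivariant configuration. Since $M$ is a flat $3$-manifold with $H_2(M;\Z)\cong\Z$, the fiber $T^2$ lifts to a horizontal plane in the Euclidean universal cover, and the totally geodesic tori representing $\pm$ the generator of $H_2(M)$ come in a single $S^1$-family: they are the fibers of the torus bundle $M\to S^1$ (together with their images under the deck group), all mutually parallel and disjoint, foliating $M$. So first I would identify $T$ with one fiber $T_{t_0}$ of this foliation $\{T_t\}_{t\in S^1}$.

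Next I would analyze the action of $\rho$ on the base circle. The covering involution $\rho$ acts isometrically, hence permutes the leaves of this parallel foliation, inducing an involution $\bar\rho$ on the base $S^1 = \R/\Z$. Because $\rho$ reverses orientation on $M$ in a way compatible with acting as $-1$ on $H_2(M)\cong\Z$ (the branched double cover of a link in $S^3$ has $b_1$ forcing this sign behavior — more precisely, one checks that the induced map on $H_2(M)$ is multiplication by $-1$, since $\rho_*$ is an involution on $\Z$ and cannot be the identity as $M/\rho = S^3$ has $H_2=0$), the map $\bar\rho$ on $S^1$ is an orientation-reversing involution. Such an involution of $S^1$ has exactly two fixed points, say at base parameters $s_1, s_2$. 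The leaves $T_{s_1}$ and $T_{s_2}$ are then each preserved (setwise) by $\rho$, and $\rho$ acts on each as an orientation-reversing isometry of $T^2$, hence with $[\rho(T_{s_i})] = -[T_{s_i}]$.

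To get the disjointness $\rho(T)\cap T = \emptyset$, I would simply choose $T$ to be a leaf $T_t$ with base parameter $t$ strictly between the two fixed values $s_1$ and $s_2$, so that $\bar\rho(t)\ne t$; then $\rho(T) = T_{\bar\rho(t)}$ is a different parallel leaf, hence disjoint from $T$, and $[\rho(T)] = -[T]$ because $\rho$ is orientation-reversing on $M$ and $T_{\bar\rho(t)}$ is homologous to $\pm T$ with the sign dictated by $\rho_* = -1$ on $H_2(M)$. This new $T$ is still totally geodesic (it is parallel to the original one), so replacing the torus from the previous lemma by this one establishes the claim.

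The main obstacle I anticipate is pinning down the two structural facts cleanly: (a) that the totally geodesic tori representing a generator of $H_2(M)$ really do form a single parallel foliation coinciding with the fibration — this needs the classification of flat $3$-manifolds and the observation that a totally geodesic torus lifts to an affine plane whose translation lattice is forced to be the fiber lattice; and (b) that $\rho_*$ acts as $-1$ on $H_2(M)\cong\Z$ — this should follow from the fact that $M/\rho$ is the sphere (or $S^3$ with a $\Z$-homology-sphere quotient appearing in the orbifold computation), forcing the $\rho$-invariant part of $H_2(M;\Q)$ to vanish. Once those two points are in hand, the averaging over the two fixed leaves and the choice of an intermediate leaf is routine.
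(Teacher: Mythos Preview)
Your overall strategy is sound and leads to the right conclusion, but there is one slip and the route differs from the paper's in a way worth noting.

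First, the slip: you write that ``$\rho$ reverses orientation on $M$.'' It does not. Since $\pi\circ\rho=\pi$ and $\pi$ has degree $2$, multiplicativity of degree gives $\deg(\rho)=1$, so $\rho$ is orientation-\emph{preserving}. What you actually need (and what you go on to argue) is that $\rho_*=-1$ on $H_2(M)\cong\Z$; combined with $\rho$ being orientation-preserving on $M$, this is exactly what forces the induced involution $\bar\rho$ on the leaf space $S^1$ to be orientation-reversing, hence a reflection with two fixed points. So your conclusion survives, but the sentence needs repair.

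Second, your structural fact (b) (``$\rho_*=-1$ because $M/\rho=S^3$ has $H_2=0$'') is correct, but the implicit transfer argument for a \emph{branched} cover deserves a line of justification. The paper avoids this by arguing case by case: if $\rho(T)=T$ and $\rho_*[T]=[T]$, then $\pi(T)$ is a closed oriented surface in $S^3$ which a suitable curve $\pi(\gamma)$ meets algebraically twice, contradicting $H_1(S^3)=0$; if $\rho(T)\cap T=\emptyset$ and $\rho_*[T]=[T]$, then $\rho$ must swap the two $T^2\times I$ pieces cut out by $T\cup\rho(T)$ and hence be fixed-point free, contradicting the existence of the branch locus. These are more elementary than a transfer computation.

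As for the comparison: the paper never sets up the global foliation picture. It first shows, by lifting $T$ and $\rho(T)$ to parallel planes in the $6$-fold cover $T^3$ (and then to $\E^3$), that either $\rho(T)=T$ or $\rho(T)\cap T=\emptyset$; it then handles $\rho_*[T]=-[T]$ separately in each case as above; finally, in the case $\rho(T)=T$, it pushes $T$ off to a nearby parallel torus $T_\varepsilon$ with $\rho(T_\varepsilon)=T_{-\varepsilon}$. Your approach packages all of this into the single observation that $\rho$ acts on the $S^1$-family of parallel geodesic tori as a reflection, and then picks a non-fixed leaf. The paper's argument is more hands-on and avoids your obstacles (a) and (b) entirely; yours is cleaner once those facts are in hand. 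Your sketch for (a) is fine: the parallel translates of $T$ (projections of planes parallel to $\tilde T$) do foliate $M$, and $\rho(T)$ lies in this foliation because $[\rho(T)]=\pm[T]$ forces its lifts to be parallel to those of $T$.
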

\begin{proof}
Since $H_2(M)\cong\mathbb Z$, $\rho(T)$ is homologous to $\pm T$.
Endow the three-torus, $T^3$, with the flat metric obtained by
pulling back the metric on $M$ under the $6$--fold cyclic covering
map  $p\co T^3\to M$. We will compare the lifts of $T$ and
$\rho(T)$ under $p$.

Let $\widetilde T$ denote a component of $p^{-1}(T)$.  $\widetilde
T$ is, like $T$,  totally geodesic.  The surface
$p^{-1}\circ\rho(T)$ is a disjoint union of totally geodesic tori
in $T^3$, such that each component is homologous to
$\pm[\widetilde T]$. We claim that $\widetilde T$ is either a
component of $p^{-1}\circ\rho(T)$ or disjoint from
$p^{-1}\circ\rho(T)$. This follows from elementary Euclidean
geometry.  Indeed, since all the tori are geodesic they lift to
planes in the universal cover, $\E^3$.   The assumption that the
tori are homologous up to sign implies that these planes are
parallel or have a pair of components which coincide (if not, they
would intersect in a line that projects to a non-separating circle
of intersection between the tori in $T^3$, contradicting the
homological assumption).   This proves the claim.  In the case
that $\widetilde T$ is  disjoint from $p^{-1}\circ\rho(T)$ we see
that $\rho(T)\cap T=\emptyset$, as desired.  If  $\widetilde T$ is
a component of $p^{-1}\circ\rho(T)$, then $\rho(T)=T$ as a set.

In the case that $\rho(T)=T$, we claim that $\rho_*[T]=-[T]$.
Suppose, otherwise, that $\rho_*[T]=[T]$. Then $\rho|_T$ is an
orientation preserving involution. Thus $\pi(T)$ is a closed
oriented surface in $S^3$. Choose a simple closed curve
$\gamma\subset M$ intersecting $T$ exactly once. Then
$\pi(\gamma)$ is a closed curve in $S^3$ which has algebraic
intersection number $2$ with $\pi(T)$, contradicting the fact that
$H_1(S^3)=0$.

Thus $\rho_*[T]=-[T]$ if $\rho(T)=T$. For some small
$\varepsilon>0$, consider the set
$$N_\varepsilon=\{x\in M|\:\mathrm{dist}(x,T)\le \varepsilon\}$$
Then $\partial N_\varepsilon$ consists of two flat tori,
$T_\varepsilon$ and $T_{\sd \varepsilon}$. The assumptions that
$\rho_*[T]=-[T]$ and $\rho(T)=T$ imply that
$\rho(T_\varepsilon)=T_{-\varepsilon}$. By working with
$T_{\varepsilon}$ instead of $T$,  we obtain the desired
conclusion.

Finally, if $\rho(T)\cap T=\emptyset$, we claim that $\rho_*[T]=-[T]$
 also holds. Indeed, $T$ and $\rho(T)$ split $M$ into two
parts, $C_1$ and $C_2$, both of which are homeomorphic to
$T^2\times I$. Now if $\rho_*[T]=[T]$, then $\rho$ would have to
switch $C_1$ and $C_2$. Since $\rho$ has no fixed points on
$\partial C_1=T\cup\rho(T)$, this would imply that $\rho$ is free, a contradiction.
\end{proof}

 Next, we  show that $L$ must result from $2$-cabling the components of the Hopf link.
\begin{lem}\label{lem:SateHopf}
If $M=\Sigma(L)$, then there exists a genus one Heegaard splitting
$S^3=V_1\cup V_2$, such that $L$ is the union of a closed
$2$--braid in $V_1$ and a closed $2$--braid in $V_2$(see Figure
\ref{fig:hopfcable}).
\end{lem}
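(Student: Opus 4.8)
The plan is to cut $M$ open along the two parallel totally geodesic tori supplied by the preceding lemmas, to follow the covering involution through the resulting decomposition, and to identify the quotient of each piece directly.

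First I would record the decomposition. Since $[T]$ generates $H_2(M)\cong\Z$ and $[\rho(T)]=-[T]$, the surface $T\cup\rho(T)$ is null-homologous, hence separates $M$; being moreover a pair of disjoint parallel totally geodesic tori in the flat manifold $M$, arguing with lifts to the Euclidean cover as in the previous lemma shows that the two complementary pieces $C_1$ and $C_2$ are each isometric to a flat product $T^2\times[-1,1]$ having $T$ and $\rho(T)$ as their boundary tori. Next I would see how $\rho$ sits relative to this decomposition. Because $\rho$ permutes $\{T,\rho(T)\}$ and $\rho\bigl(\rho(T)\bigr)=T$, the involution $\rho$ exchanges $T$ with $\rho(T)$; hence it either preserves each $C_i$ (exchanging its two boundary tori) or interchanges $C_1$ with $C_2$. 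The latter is impossible: it would force $\mathrm{Fix}(\rho)\subset\partial C_1=T\cup\rho(T)$, on which $\rho$ has no fixed point since it exchanges those two disjoint tori, so $\rho$ would be a free involution --- contradicting that $\rho$ is the deck transformation of a branched double cover with nonempty branch locus $L=\pi(\mathrm{Fix}(\rho))$ (note $L\neq\emptyset$, else $\Sigma(L)$ would be disconnected). So $\rho$ preserves each $C_i$ and exchanges its two boundary tori.

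Now I would analyze $\rho_i:=\rho|_{C_i}$. Being an isometry of the flat product $T^2\times[-1,1]$ that exchanges the two ends, $\rho_i$ preserves the orthogonal geodesic arcs joining the two boundary tori, hence has the form $\rho_i(x,t)=(\phi_i(x),-t)$ for an isometric involution $\phi_i$ of $T^2$; its fixed set $\mathrm{Fix}(\phi_i)\times\{0\}$ must be $1$--dimensional (it is a union of components of $\pi^{-1}(L)$), so $\phi_i$ is orientation reversing with nonempty fixed set. Such an involution of a flat torus acts fiberwise for a suitable circle fibration $T^2\to S^1$, restricting on each fiber circle to a reflection with two fixed points; in particular $\mathrm{Fix}(\phi_i)$, hence also $\mathrm{Fix}(\rho_i)=\mathrm{Fix}(\phi_i)\times\{0\}$, meets every fiber in exactly two points. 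Passing to the quotient, $W_i:=C_i/\rho_i$ inherits a map to $S^1$ each of whose fibers is the quotient of the annulus $(\text{fiber circle})\times[-1,1]$ by an orientation-preserving involution that exchanges the two boundary circles and has two interior fixed points; such a quotient is a disk, so $W_i$ is an orientable disk bundle over $S^1$ with torus boundary $(T\cup\rho(T))/\rho_i\cong T^2$, i.e.\ a solid torus. Inside it, the branch locus $L\cap W_i$ is the image of $\mathrm{Fix}(\rho_i)$, which meets every meridian disk of $W_i$ in exactly two points; that is, $L\cap W_i$ is a closed $2$--braid. Finally $S^3=M/\rho=W_1\cup W_2$, the two solid tori being glued along the single torus $(T\cup\rho(T))/\rho$, so this is a genus one Heegaard splitting $S^3=V_1\cup V_2$ with $V_i=W_i$, and $L=(L\cap V_1)\cup(L\cap V_2)$ is the union of a closed $2$--braid in $V_1$ and a closed $2$--braid in $V_2$, which is the desired conclusion (compare Figure~\ref{fig:hopfcable}).

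The step I expect to be the main obstacle is the analysis of $\rho_i$: extracting from ``$\rho_i$ is an isometric involution of $T^2\times[-1,1]$ exchanging the ends with $1$--dimensional fixed set'' enough structure to conclude that $W_i$ is a solid torus and $L\cap W_i$ a closed $2$--braid. This rests on the classification of orientation reversing involutions of a flat torus with nonempty fixed set --- that such an involution has one or two parallel nonseparating fixed circles and acts fiberwise as a reflection for a suitable circle fibration of the torus --- after which identifying $W_i$ as a disk bundle over the circle, and hence a solid torus, is routine.
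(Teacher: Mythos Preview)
Your argument is correct, but the key step is handled quite differently from the paper. Both proofs cut $M$ along $T\cup\rho(T)$ into flat pieces $C_i\cong T^2\times I$ and observe that $\rho$ preserves each $C_i$ while exchanging its boundary tori. They diverge in showing $V_i=\pi(C_i)$ is a solid torus with $L\cap V_i$ a closed $2$--braid. The paper argues indirectly: since $\pi(T)\subset S^3$, each $V_i$ has the homology of a solid torus; an annulus in $C_i$ pushes down to a singular annulus generating $H_2(V_i,\partial V_i)$, and Gabai's theorem that the singular Thurston norm equals the Thurston norm then produces an embedded compressing disk, so $V_i$ is a solid torus; finally an Euler characteristic count on the branched cover of a meridian disk yields the $2$--braid. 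You instead write $\rho_i(x,t)=(\phi_i(x),-t)$, classify the orientation-reversing isometric involution $\phi_i$ of the flat torus, and compute $C_i/\rho_i$ directly as an orientable disk bundle over $S^1$ whose branch set meets each disk fiber twice. Your route is more elementary --- it avoids Gabai's theorem entirely --- at the cost of the (straightforward) classification of such torus involutions, which you rightly flag as the step needing care. One small gap, shared with the paper: you need $\mathrm{Fix}(\rho_i)\neq\emptyset$ for \emph{each} $i$, not just in their union; if $\phi_i$ were free, $V_i$ would be the twisted $I$--bundle over the Klein bottle, which cannot sit in $S^3$ since no Klein bottle embeds there.
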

\begin{proof}
The last lemma showed that we can assume $T$ and $\rho(T)$ split
$M$ into two parts $C_1$ and $C_2$, with each $C_i$ homeomorphic
to $T^2\times I$. Moreover, since $\rho_*[T]=-[T]$, we have
$\rho(C_i)=C_i$, and $V_i=\pi(C_i)$ is a manifold with torus
boundary. Since $\pi(T)$ is an embedded torus in $S^3$, the two
manifolds $V_1,V_2$ bounded by $\pi(T)$ have the same homology
groups as the solid torus.

Let $h$ be a generator of $H_2(V_1,\partial V_1)\cong \Z$. Since $\pi\co
C_1\to V_1$ is a proper map of nonzero degree, there exists a
primitive element $\tilde h\in H_2(C_1,\partial C_1)$ such that
$\pi_*(\tilde h)=h$. We can choose an annulus $A\subset
C_1=T^2\times I$ representing $\tilde h$, hence $\pi(A)$
represents $h$. By Gabai's theorem that the singular Thurston norm
is equal to the Thurston norm \cite[Corollary~6.18]{Ga1}, the
Thurston norm of $h$ is $0$. Let $G\subset V_1$ be a Thurston norm
minimizing surface in the homology class $h$. Then $\partial G$
represents a primitive element in $H_1(\partial V_1)$. Attaching
annuli to $\partial G$ if necessary, we may assume $|\partial
G|=1$, and hence the component of $G$ containing $\partial G$ is a disk.
Since $V_1\subset S^3$, it follows that $V_1$ is a solid torus.

$C_1$ is homeomorphic to $T^2\times[0,1]$, and $\partial C_1$
consists of two parallel flat tori. The universal cover
$\widetilde{C_1}$ of $C_1$ is a submanifold of $\E^3$ bounded by
two parallel planes. After scaling the metric, $\widetilde{C_1}$
is isometric to $\mathbb E^2\times[0,1]$. Each $\E^2\times t$ is
preserved by the (isometric) action of $\pi_1(C_1)$, let
$R_t=(\E^2\times t)/\pi_1(C_1)$, then $C_1$ is foliated by the
flat tori $R_t$, and the distance between $R_t$ and $R_0$ is $t$.
Since $\rho$ is an isometry and $\rho(R_0)=R_1$, $\rho$ must send
$R_t$ to $R_{1-t}$.

Now as the fixed point set of an isometry, $\pi^{-1}(L)$ is
geodesic. Each $C_i$ must contain some components of
$\pi^{-1}(L)$. Let $K=\pi^{-1}(L)\cap C_1$. Since $K$ is a
geodesic disjoint from the flat tori $R_0$ and $R_1$, $K$ should
be parallel to them. Indeed, since $\rho$ is an isometry, for each
component $K_i$ of $K$
$$\mathrm{dist}(K_i,R_0)=\mathrm{dist}(\rho(K_i),\rho(R_0))=\mathrm{dist}(K_i,R_1),$$
so $K_i$ lies on the torus $R_{\frac12}$. Hence $K\subset
R_{\frac12}$. Let $R_{[0,\frac12]}=\cup_{t\in[0,\frac12]}R_t$,
then $R_{[0,\frac12]}$ is homeomorphic to $T^2\times[0,\frac12]$,
and $\rho(R_{[0,\frac12]})\cap R_{[0,\frac12]}=R_{\frac12}$.
Choose a properly embedded surface $W\subset R_{[0,\frac12]}$ such
that each component of $W$ is an annulus whose boundary consists
of a component of $K$ and a curve on $R_0$. Then $\pi(W)$ is a
disjoint union of annuli in $V_1=\pi(C_1)$, where each annulus
connects a component of $L$ to an essential curve on
$\partial V_1$. It follows that $L\cap V_1$ is a torus link in the
solid torus $V_1$. A simple Euler characteristic count shows that
$L$ intersects each meridian disk of $V_1$ in two points, so
$L\cap V_1$ is a $2$--braid in $V_1$.

The same argument as above shows that $V_2$ is a solid torus, and
$L\cap V_2$ is a $2$--braid in $V_2$. Now $V_1\cup V_2$ is a genus one
Heegaard splitting for $S^3$.
\end{proof}

To complete the theorem, let $L=H_{m,n}\subset S^3$ be the link
from the previous lemma, such that $H_{m,n}\cap V_1$ is isotopic
to the $(2,m)$ torus link in $V_1$ and $H_{m,n}\cap V_2$ is
isotopic to the $(2,n)$ torus link in $V_2$.

\begin{lem}
The manifold $\Sigma(H_{m,n})$ is a torus bundle, and the
monodromy is represented by the matrix
$$\begin{pmatrix}
mn-1&n\\
-m&-1
\end{pmatrix}.
$$
\end{lem}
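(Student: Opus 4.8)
The plan is to compute the branched double cover $\Sigma(H_{m,n})$ directly from the genus-one Heegaard decomposition $S^3 = V_1 \cup_{\pi(T)} V_2$ produced by Lemma~\ref{lem:SateHopf}, by lifting each solid torus separately. Since $L\cap V_i$ is a closed $2$-braid (i.e.\ the $(2,m)$ or $(2,n)$ torus link) in the solid torus $V_i$, the restriction of the branched cover over $V_i$ is a known object: the double cover of a solid torus branched over a $2$-braid is again a solid torus $C_i \cong T^2 \times I$, and one can identify precisely which curves on $\partial C_i = T^2$ map to the meridian and longitude of $\partial V_i$. Gluing $C_1$ to $C_2$ along $\pi^{-1}(\pi(T)) = T \sqcup \rho(T)$ — equivalently, gluing the two copies of $T^2\times I$ along their boundary tori according to how the Heegaard gluing on $\partial V_1 = \partial V_2$ lifts — exhibits $M = \Sigma(L)$ as $T^2\times I \cup_{\tensor A} T^2\times I$, and the monodromy matrix $\tensor A$ is read off from the composition of the gluing maps.

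\textbf{The key steps, in order.} First I would fix bases: pick the meridian $\mu_i$ and longitude $\lambda_i$ of the solid torus $V_i$, with $\mu_i$ bounding a disk in $V_i$. A closed $2$-braid in $V_i$ with $m$ crossings can be taken to be the preimage of a geodesic on $R_{1/2}$, so its branch locus lifts, and the branched double cover $C_i \to V_i$ restricted to $\partial$ is the double cover $\partial C_i \to \partial V_i$ determined by the class $[\text{branch curve}] \in H_1(\partial V_i)$: the $2$-braid represents $2[\lambda_i]$ in $H_1(V_i)$, and the connected double cover of $\partial V_i = T^2$ corresponding to the subgroup where this class is even is the torus $\partial C_i$. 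Second, I would compute the deck group of $C_i$ and express a basis $(a_i, b_i)$ of $H_1(\partial C_i)$ in terms of $(\mu_i,\lambda_i)$: the meridian $\mu_i$ lifts to a loop $a_i$ (since the branch locus links $\mu_i$ zero times… actually linking number $2$, hence $\mu_i$ lifts; more carefully $\mu_i \cdot (2\lambda_i) = 0$ so $\mu_i$ lifts to a loop), while $\lambda_i$ lifts to an arc whose square is $b_i$, so that the covering sends $a_i \mapsto \mu_i$, $b_i \mapsto 2\lambda_i$, together with the $m$-dependent shearing coming from the $m$ crossings (this introduces the entry involving $m$). Third, I would assemble: the Heegaard gluing $\partial V_1 \to \partial V_2$ for the genus-one splitting of $S^3$ sends $\mu_1 \mapsto \lambda_2$, $\lambda_1 \mapsto \mu_2$; lifting this to $\partial C_1 \to \partial C_2$ and composing with the two local covering data gives the gluing of the two $T^2\times I$ pieces, whose associated monodromy on $T^2$ is the claimed matrix $\left(\begin{smallmatrix} mn-1 & n \\ -m & -1\end{smallmatrix}\right)$. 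Finally I would sanity-check: the determinant is $-(mn-1) + mn = 1$, confirming it lies in $SL(2,\Z)$, and specializing $(m,n)=(1,3)$ should give trace $3$, matching the trefoil monodromy as required for Proposition~\ref{prop:TreDoub}.

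\textbf{The main obstacle} I anticipate is bookkeeping the lifts of the longitude and meridian correctly — in particular tracking how the $m$ crossings of the $2$-braid contribute a Dehn-twist-type shear to the identification of $H_1(\partial C_1)$ with $\Z^2$, and keeping the two solid-torus conventions (which of $\mu,\lambda$ bounds in $V_i$, and the orientation of the deck transformation) consistent across the gluing. A clean way to organize this is to work entirely in $H_1$ of the four-punctured-sphere-free picture: present $V_i$ minus the branch locus, take the index-two subgroup, and compute the transfer; alternatively, use the fact that the double cover of $(D^2, \{2 \text{ points}\})$ is an annulus and build $C_i$ as a mapping torus of a braid-induced map on the annulus, reducing everything to a $2\times 2$ matrix multiplication. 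Either way, once the two boundary identifications are pinned down the monodromy is a routine product of three explicit integer matrices, and the computation is complete.
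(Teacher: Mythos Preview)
Your overall strategy is the same as the paper's: split $\Sigma(H_{m,n})$ along the preimage of the Heegaard torus into two copies of $T^2\times I$, track bases on the boundary tori, and read off the monodromy from how they glue. But there is a concrete error in your second step that would derail the computation as written. The restriction of the branched cover to $\partial V_i$ is the \emph{trivial} (disconnected) double cover, not a connected one: the branch locus in each $V_j$ represents twice the core in $H_1(V_j)$, so every loop on $\partial V_i$ has even total linking number with $L$ and therefore lifts to a closed loop. In particular your claim that ``$\lambda_i$ lifts to an arc whose square is $b_i$'' is false; both $\mu_i$ and $\lambda_i$ lift to honest loops, one copy $\tilde\mu_k,\tilde\lambda_k$ on each of the two boundary tori $T_1,T_2$ of $C_i$.

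Consequently the $m$-dependence does not enter as a shear on a single boundary torus. The paper instead records the relations in $H_1(C_1)$ between the two lifted bases: the preimage of a meridian disk of $V_1$ is an annulus giving $[\tilde\mu_1]=-[\tilde\mu_2]$, and the preimage of an annulus in $V_1$ cobounding the branch locus with a $(2,m)$ curve on $\partial V_1$ gives $[\tilde\lambda_1]=[\tilde\lambda_2]+m[\tilde\mu_2]$. The analogous relations in $C_2$ (with $m$ replaced by $n$ and the roles of $\mu,\lambda$ swapped) then compose to the stated matrix. Once you correct the boundary-cover picture, your outline becomes exactly this argument. (Your sanity check is also off: for $(m,n)=(1,3)$ the trace is $mn-2=1$, which is the correct trace for the order-six monodromy of $S^3_0(3_1)$.)
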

\begin{proof}
The proof of Lemma~\ref{lem:SateHopf} shows that $\Sigma(H_{m,n})$
is a torus bundle. We only need to determine its monodromy.

Choose two curves $\mu,\lambda$ on the surface $\partial
V_1=-\partial V_2$, such that $\mu,\lambda$ are the meridians of
$V_1,V_2$, respectively. Moreover, the curves are oriented such
that $\mu\cdot\lambda=1$.

Let $T_1,T_2$ be the two components of $\pi^{-1}(\partial V_1)$.
Let
$$\tilde{\mu}_i=\pi^{-1}(\mu)\cap T_i,\qquad\tilde{\lambda}_i=\pi^{-1}(\lambda)\cap T_i.$$

The preimage of the meridian disk of $V_1$ is an annulus which
gives a homology (in $C_1=\pi^{-1}(V_1)$) between
$[\tilde{\mu}_1]$ and $-[\tilde{\mu}_2]$, so
$$[\tilde{\mu}_1]=-[\tilde{\mu}_2]\in H_1(C_1).$$ Moreover, there is a compact
surface $A\subset V_1$, such that $A$ is an annulus when $m$ is odd and $A$
is the union of two annuli when $m$ is even,
and $\partial A$ consists of $H_{m,n}\cap V_1$ and a $(2,m)$ torus link on $\partial V_1$.
The preimage of $A$ gives a homology (in $C_1$) between
$2[\tilde{\lambda}_1]+m[\tilde{\mu}_1]$ and $2[\tilde{\lambda}_2]+m[\tilde{\mu}_2]$. Since
$[\tilde{\mu}_1]=-[\tilde{\mu}_2]$ in $C_1$, it follows that
$$[\tilde{\lambda}_1]=[\tilde{\lambda}_2]+m[\tilde{\mu}_2]\in
H_1(C_1).$$

In $V_2$, the roles of $\lambda$ and $\mu$ are switched. The same
argument as above shows that
$$[\tilde{\lambda}_1]=-[\tilde{\lambda}_2],\qquad [\tilde{\mu}_1]=[\tilde{\mu}_2]+n[\tilde{\lambda}_2]$$
in $H_1(C_2)$.

Consider the manifold $C_1\cup_{T_2}C_2$, which is homeomorphic to
$T^2\times I$. There are two copies of
$\tilde{\mu}_1,\tilde{\lambda}_1$ on its boundary, and their
homological relation can be computed as follows:
\begin{eqnarray*}
[\tilde{\mu}_1]&=&[\tilde{\mu}_2]+n[\tilde{\lambda}_2]\\
&=&-[\tilde{\mu}_1]+n([\tilde{\lambda}_1]+m[\tilde{\mu}_1])\\
&=&(mn-1)[\tilde{\mu}_1]+n[\tilde{\lambda}_1]
\end{eqnarray*}
\begin{eqnarray*}
[\tilde{\lambda}_1]&=&-[\tilde{\lambda}_2]\\
&=&-([\tilde{\lambda}_1]+m[\tilde{\mu}_1]).
\end{eqnarray*}
It follows that the monodromy of the torus bundle is given by the
matrix $\begin{pmatrix}
mn-1&n\\
-m&-1
\end{pmatrix}$.
\end{proof}

We are now able to  finish the proof of Proposition ~\ref{prop:TreDoub}.  If $\Sigma(H_{m,n})$ is homeomorphic to the zero surgery on the
trefoil knot, then the monodromy of the torus bundle has
order $6$. This implies that the trace of
the matrix is $1$. By the preceding lemma we find that $mn=3$, and hence $m=\pm1,n=\pm3$
or $m=\pm3,n=\pm1$. Since $H_{n,m}\simeq H_{m,n}$, the link is
$H_{1,3}$ or $H_{-1,-3}$.
\end{proof}

\begin{rem}
Note that our proof applies equally well to classify links whose
double branched cover gives rise to any torus bundle possessing a
Euclidean structure.  In particular, we recover the well-known
result that $T^3$ is not a double branched cover of a link in
$S^3$ \cite{HirNeum}.
\end{rem}

\section{Links with rank $4$ Khovanov homology}

In this section, we prove Theorem \ref{thm:KhRank4}. The theorem
will follow quickly from the results of the preceding two
sections, together with Proposition~\ref{prop:ssbound}. It will be
useful, however, to first understand essential spheres in branched
double covers. The following proposition is a well-known
consequence of the Equivariant Sphere Theorem, originally proved in
\cite{MSY}. (See also \cite{Du} for the version we use here.)

\begin{prop}\label{prop:CovIrr}
(1) Suppose $L$ is a link in $S^3$. If $L=L_1\#L_2$, then
$\Sigma(L)=\Sigma(L_1)\#\Sigma(L_2)$. If $L=L_1\sqcup L_2$, then
$\Sigma(L)=S^1\times S^2\#\Sigma(L_1)\#\Sigma(L_2)$.

(2) Suppose $L$ is a non-split prime link in $S^3$, then
$\Sigma(L)$ is irreducible.

(3) If $L$ is a non-split link, then $\Sigma(L)$ contains no
$S^1\times S^2$ summand.
\end{prop}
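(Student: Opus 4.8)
The plan is to dispose of part (1) by an elementary cut-and-paste argument with tangle decompositions -- no equivariant topology is needed there -- and to obtain parts (2) and (3) from the Equivariant Sphere Theorem of \cite{MSY} (in the form recorded in \cite{Du}), together with a case analysis of how an equivariant essential sphere can meet the branch locus $\pi^{-1}(L)$.

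For part (1), I would write a connected sum $L=L_1\#L_2$ as a union of tangles $(S^3,L)=(B_1,\tau_1)\cup_{(S^2,\,2\text{ pts})}(B_2,\tau_2)$, where the summing sphere meets $L$ in two points and $\tau_i\subset B_i$ is a properly embedded arc with $\Sigma(B_i,\tau_i)\cong\Sigma(L_i)$ minus an open ball. The preimage of the summing sphere is a single $2$--sphere (the double branched cover of a $2$--sphere over two points), and cutting along it identifies $\Sigma(L)$ with $\Sigma(B_1,\tau_1)\cup\Sigma(B_2,\tau_2)=\Sigma(L_1)\#\Sigma(L_2)$. For a split link $L=L_1\sqcup L_2$, write $S^3=B_1\cup_{S^2}B_2$ with $L_i\subset\mathrm{int}(B_i)$ and the splitting sphere disjoint from $L$; since this sphere is null-homotopic in $S^3\setminus L$, its preimage is two disjoint $2$--spheres, so $\Sigma(L)$ is assembled from two copies of ``$\Sigma(L_i)$ with two open balls removed'' glued along the two sphere pairs. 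Gluing the first pair gives $\Sigma(L_1)\#\Sigma(L_2)$ with two leftover boundary spheres, and gluing those two spheres of a connected $3$--manifold introduces one $S^1\times S^2$ summand, yielding $\Sigma(L)=S^1\times S^2\#\Sigma(L_1)\#\Sigma(L_2)$.

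For parts (2) and (3), let $\rho\co\Sigma(L)\to\Sigma(L)$ be the covering involution, so $\mathrm{Fix}(\rho)=\pi^{-1}(L)$. If $\Sigma(L)$ were reducible (case (2)), or had an $S^1\times S^2$ summand -- equivalently a non-separating $2$--sphere -- (case (3)), the Equivariant Sphere Theorem yields an embedded essential $2$--sphere $S$ with $\rho(S)=S$ or $\rho(S)\cap S=\emptyset$, which in case (3) can moreover be taken non-separating. I would then push $S$ down by $\pi$. If $\rho(S)\cap S=\emptyset$, then $\pi(S)$ is an embedded $2$--sphere disjoint from $L$, bounding balls $B_1,B_2$; if some $B_i$ misses $L$ then $\pi^{-1}(B_i)$ is two balls and $S$ bounds one, contradicting that $S$ is essential (resp. non-separating), and if both $B_i$ meet $L$ then $\pi(S)$ splits $L$. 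If $\rho(S)=S$, examine $\rho|_S$: it cannot be free, or else $\mathbb{RP}^2$ would embed in $S^3$; if it is a rotation then $\pi(S)$ meets $L$ transversely in two points and exhibits $(S^3,L)$ as a connected sum $(B_1,\tau_1)\#(B_2,\tau_2)$, while $S=\pi^{-1}(\pi(S))$ is connected and \emph{separates} $\Sigma(L)$ into $\Sigma(B_1,\tau_1)$ and $\Sigma(B_2,\tau_2)$ -- absurd in case (3), and in case (2) forcing one $\tau_i$ to be the trivial tangle by primality of $L$, so that $\Sigma(B_i,\tau_i)$ is a ball and $S$ is inessential; and if $\rho|_S$ is a reflection, then $S$ contains a component $K_0$ of $\mathrm{Fix}(\rho)$ and $\pi(S)$ is an embedded disk with interior disjoint from $L$ and boundary the component $\pi(K_0)$, so the boundary of a regular neighborhood of this disk splits $L$ (unless $L$ is a lone unknot, where $\Sigma(L)=S^3$ and there is nothing to prove). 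In each case we contradict non-splitness or, where invoked, primality of $L$.

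I expect the main obstacle to be pinning down the precise form of the Equivariant Sphere Theorem needed: an \emph{essential} equivariant sphere and, for part (3), one that is moreover \emph{non-separating}. This is the content of \cite{MSY} and \cite{Du}, but the statement must be quoted with care. The argument also uses the classical fact that the double branched cover of a ball over an unknotted arc is a ball, so that a prime connected-sum decomposition of $L$ corresponds to a reducing $2$--sphere of $\Sigma(L)$ bounding a genuine ball. Finally, a cleaner route to part (3), which sidesteps the non-separating refinement, is to factor $L$ into prime non-split links $L=L_1\#\cdots\#L_k$, apply part (1) to get $\Sigma(L)=\Sigma(L_1)\#\cdots\#\Sigma(L_k)$, and then invoke part (2): each $\Sigma(L_i)$ is irreducible, hence not $S^1\times S^2$, so uniqueness of the prime decomposition rules out an $S^1\times S^2$ summand.
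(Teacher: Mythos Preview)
Your proposal is correct and follows essentially the same approach as the paper. Part~(1) is treated with the same cut-and-paste argument (the paper simply calls it ``obvious''); part~(2) is the same Equivariant Sphere Theorem case analysis, with your trichotomy free/rotation/reflection for $\rho|_S$ corresponding exactly to the paper's trichotomy $\pi(S)\in\{\RP{2},\text{sphere},\text{disk}\}$; and for part~(3) the paper uses precisely your ``cleaner route'' --- decompose $L$ into non-split prime summands, apply (1) and (2), and invoke uniqueness of prime decomposition --- rather than your first approach via a non-separating equivariant sphere, which, as you correctly flag, would require a sharper form of the Equivariant Sphere Theorem.
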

\begin{proof}
\vspace{5pt}(1) This fact is obvious.

\vspace{5pt}(2) Assume $\Sigma(L)$ is reducible. By the
Equivariant Sphere Theorem, there exists an essential sphere
$S\subset\Sigma(L)$, such that $\rho(S)=S$ or $\rho(S)\cap
S=\emptyset$.

If $\rho(S)=S$, then $S$ doubly branched covers $\pi(S)$, hence
$\pi(S)$ is an embedded surface in $S^3$, and it is either a disk,
a sphere or a projective plane. The last case is immediately ruled
out since $S^3$ does not contain any embedded projective plane.

If $\pi(S)$ is a disk, then $\partial(\pi(S))$ is a component of
$L$ which bounds a disk in the complement of $L$. This
contradicts the assumption that $L$ is non-split.

If $\pi(S)$ is a sphere, then $\pi|_S$ is a $2$--fold branched
covering with two ramification points. So $L$ intersects $\pi(S)$
in exactly two points. The sphere $\pi(S)$ splits $S^3$ into two
balls $B_1,B_2$. If $L\cap B_i$ is a trivial arc in $B_i$, then
$\pi^{-1}(B_i)$ is a ball bounded by $S$ in $\Sigma(L)$,
contradicting the assumption that $S$ is essential. So $L$ is a
nontrivial connected sum, which is impossible since $L$ is prime.

If $\rho(S)\cap S=\emptyset$, then $\pi(S)$ is an embedded sphere
in the complement of $L$. Since $L$ is non-split, $\pi(S)$ bounds
a ball in $S^3-L$. It follows that $S$ bounds a ball in
$\Sigma(L)$, a contradiction.

\vspace{5pt}(3) If $L$ is non-split, there exists a collection of
spheres $S_1,\dots,S_n\subset S^3$, such that each $S_i$
intersects $L$ in exactly two points, and they decompose $L$ as a
connected sum of non-split prime links $L_1,\dots,L_{n+1}$. Thus
$\Sigma(L)$ is a connected sum of $\Sigma(L_i)$'s. By (2), each
$\Sigma(L_i)$ is irreducible, so $\Sigma(L)$ contains no
$S^1\times S^2$ summand.
\end{proof}

\begin{proof}[Proof of Theorem~\ref{thm:KhRank4}] Let $L$ be a link with $\det(L)=0$.   By Theorem \ref{thm:Lee}, if $L$ is an $n$--component link, then
$\mathrm{rank}\:Kh(L)\ge 2^n$. So we can assume $L$ has two
components (it cannot have one-component, since knots satisfy $\det(K)\ne0$).

If $L$ is split with two components $K_1,K_2$, then
$$\mathrm{rank}_{\mathbb F}Kh(L;\mathbb F)=
\mathrm{rank}_{\mathbb F}Kh(K_1;\mathbb
F)\times\mathrm{rank}_{\mathbb F}Kh(K_2;\mathbb F),$$ where $\mathbb F=\Z/2\Z$.
 Indeed,
there are chain complexes for which $CKh(L)\cong CKh(K_1)\otimes
CKh(K_2)$. Thus $L$ has Khovanov rank $4$ (over $\mathbb F$) if
and only if each $K_i$ has Khovanov rank $2$.

If $L$ is non-split, then $\Sigma(L)$ contains no $S^1\times S^2$
summand by Proposition~\ref{prop:CovIrr}. Since $\det(L)=0$,
$b_1(\Sigma(L))>0$ (recall that $|\det(L)|=|H_1(\Sigma(L);\Z)|$ when $\det(L)\ne0$ and, if $\det(L)=0$, that $H_1(\Sigma(L);\Z)$ has infinite order).
Moreover, the fact that $L$ has two components implies that
$b_1(\Sigma(L))\le 1$, and hence $b_1(\Sigma(L))=1$.

Since $\mathrm{rank}_\F Kh(L;\F)=4$, Proposition~\ref{prop:ssbound}
implies that  $\mathrm{rank}\:\HFa(\Sigma(L))\le 2$. For a
manifold with $b_1(M)=1$, \cite[Theorem~10.1]{OSzAnn2} and Lemma
$2.3$ imply that $\mathrm{rank}\:\HFa(M)\ge 2$, so we see that
$\mathrm{rank}\:\HFa(\Sigma(L))=2$.   Calling on
Corollary~\ref{cor:Rank2Gen}, we see that $\Sigma(L)=\pm
S^3_0(3_1)\#Z$ for some homology sphere, $Z$. Now by
Proposition~\ref{prop:CovIrr} and the uniqueness of the
Kneser--Milnor prime decomposition, $L$ has a connected summand,
$L_0$, such that $\Sigma(L_0)=\pm S^3_0(3_1)$.
Proposition~\ref{prop:TreDoub} shows that, up to taking mirror images,
$L_0$ is either $H_{\sd1,\sd 3}$ or $H_{1,3}$. Since $L_0$ has two components, the 
other prime summands of $L$ are all knots. 

Now the connected sum formula for the Jones polynomial
$$J(L'\#L'')=J(L')J(L'')$$
implies that the Jones polynomial $J_L$
of $L$ is the product of $J_{L_0}$ with a nonzero polynomial.

Consider the case $L_0=H_{\sd 1,\sd 3}$.  The Jones polynomial of $H_{\sd 1,\sd 3}$ is $$-t^{-\frac{23}2}+t^{-\frac{21}2}-t^{-\frac{13}2}-t^{-\frac92},$$
so $J_L$ is nonzero. Since the Euler characteristic of $\widetilde{Kh}(L)$ is $J_L$, and by assumption $\mathrm{rank}_\F Kh(L;\F)=2\cm\mathrm{rank}_\F \widetilde{Kh}(L;\F)=4$, it follows that $J_L$ has exactly two terms i.e. is of the form $\pm t^{\frac{p}{2}}\mp t^{\frac{q}{2}}$.
This implies that all nonzero roots of $J_{L_0}$ are roots of unity. We claim that this is impossible.

Indeed, if we multiply $J_{H_{\sd 1,\sd 3}}$ by $-t^{\frac{23}2}$, we get
$$f(t)=t^7+t^5-t+1.$$
Assume $r=e^{i\theta}$ is a root of $f(t)$, then $r^7+r^5$ is a nonzero number with argument $6\theta$,
and $-r+1$ is a nonzero number with argument $\frac{\theta-\pi}2$. It follows that
\begin{eqnarray*}
6\theta&=&\frac{\theta-\pi}2+(2k+1)\pi\\
\theta&=&\frac{4k+1}{11}\pi.
\end{eqnarray*}
Hence $r$ is a root of $t^{11}+1$. Since $\frac{t^{11}+1}{t+1}$ is an irreducible polynomial of degree $10>7$,
the only root of unity which is also a root of $f(t)$ is $-1$.  This contradiction implies that $L$ must be split.

The case that $L_0=H_{1,3}$ follows similarly.
\end{proof}

\begin{proof}[Proof of Corollary~\ref{cor:Khcor}]
Assume that Khovanov homology detects the unknot. If $Kh(L)\cong
Kh(U_2)$, where $U_2$ is the two-component unlink, then
$\mathrm{rank}\: Kh(L)=4$.  Also, the Jones polynomial, $J_L(q)$,
is equal to that of the unlink.  In particular, we have
$|\det(L)|=|J_L(-1)|=0$. Theorem \ref{thm:KhRank4} now shows that $L$
is isotopic to a split link, each component of which has the
Khovanov homology of the unknot.   Our assumption implies that
both components are unknotted.

Conversely, if Khovanov homology detects the two-component unlink,
then it clearly detects the unknot (given a knot $K$, consider the
split link obtained from $K$ union an unknot).
\end{proof}

\end{document}